\documentclass[reqno]{amsart}
\usepackage{amssymb}
\usepackage{hyperref}
\usepackage{graphicx}
\usepackage{booktabs}
\usepackage{multirow}

\newtheorem{theorem}{Theorem}[section]
\newtheorem{proposition}[theorem]{Proposition}
\newtheorem{lemma}[theorem]{Lemma}

\theoremstyle{definition}

\theoremstyle{remark}
\newtheorem{remark}{Remark}[section]
\newtheorem*{note}{Note}

\numberwithin{equation}{section}

\begin{document}

\title[Elementary Askey-Wilson Functions]
{Elementary Askey-Wilson Functions}

%    Information for first author
\author{J.F. van Diejen}
\address{Instituto de Matem\'aticas, Universidad de Talca,
Casilla 747, Talca, Chile}

\email{diejen@utalca.cl}

%    Information for second author
\author{A.N. Kirillov}
\address{Yanqi Lake Beijing Institute of Mathematical Sciences and Applications, Huairou District, Beijing, China}

\email{kirillovxyz@gmail.com}
%\address{Research Institute for Mathematical Sciences,
%Kyoto University, Kitashirakawa Oiwake-cho, Sakyo-ku, Kyoto
%606-8502, Japan}

\thanks{This work was supported in part by the Agencia Nacional de Investigaci\'on y Desarrollo (ANID), through
FONDECYT Grant \# 1250427.}

%    General info
\subjclass[2020]{Primary: 33D15; Secondary: 05A19, 15A15, 33D45,  37K10, 37K40, 47B36,
81U40}
\keywords{Askey-Wilson function, combinatorial identities, special determinants, basic hypergeometric
series, reflectionless Jacobi operators, Toda chain, solitons}

\date{September, 2026}

\begin{abstract}
We present a determinantal evaluation formula for the
very-well-poised ${}_8\Phi_7$ basic hypergeometric Askey-Wilson function in terms of elementary functions.
The formula in question is valid
for discrete values of the four permutation-symmetric Askey-Wilson parameters;
specifically, these consist of all quadruples of parameter values that are
given, up to a sign, by powers $q^k$ with $k$ a positive integer or
half-integer, in such a way that each of the four types (positive/negative
sign, integer/half-integer power) occurs exactly once.
The proof hinges on a product formula for an associated
Cauchy-type determinant, which is of independent interest and is established
here by elementary means using Krattenthaler's `identification of factors' method for determinant evaluations.
\end{abstract}

\maketitle
%\tableofcontents

\section{Introduction}\label{sec1}
It is well-known that the requirement that a system of orthogonal
polynomials be the eigenfunctions of a second-order differential-
or difference operator is quite rigid. Indeed, this eigenfunction
property leads to the celebrated (basic) hypergeometric families
known as the {\em classical orthogonal polynomials}. The
polynomials in question form a hierarchy, the most general member of
which is given by the {\em Askey-Wilson polynomials}
\cite{ask-wil:some,gas-rah:basic}; the other families of classical
(basic) hypergeometric orthogonal polynomials can be obtained from
these Askey-Wilson polynomials via parameter specializations and
limiting transitions \cite{koe-les-swa:hypergeometric}.

The Askey-Wilson polynomials are eigenfunctions of a second-order
($q$-)difference operator, commonly referred to as the
Askey-Wilson operator. By scaling the step size to zero, the
Askey-Wilson polynomials degenerate to the Jacobi polynomials and
the eigenvalue equation degenerates to the Gauss hypergeometric
equation. Besides these
polynomial solutions, the Gauss hypergeometric equation also
admits non-polynomial solutions given by the
celebrated Gauss hypergeometric
series. From the point of view of the eigenvalue problem, the
hypergeometric solutions correspond to generic values of the
spectral variable: at a discrete sequence of special spectral
values the hypergeometric series truncates and produces the Jacobi
polynomials.

This classical state of affairs has prompted the study of
non-polynomial eigenfunctions of the Askey-Wilson operator,
expressed in terms of very-well-poised ${}_8\Phi_7$ basic
hypergeometric series
\cite{ata-sus:difference,ism-rah:associated,rah:askey,sus:some1,sus:some2}.
The non-polynomial eigenfunctions under consideration are often
referred to as {\em Askey-Wilson functions}. For special values of the spectral
variable truncation again occurs, such that the Askey-Wilson
functions reduce to the Askey-Wilson polynomials. The study of the non-polynomial eigenfunctions of the
Askey-Wilson operator has been further stimulated by their
appearance in a range of applications involving the theory of
quantum integrable systems \cite{len:non-polynomial,rou:virasoro,rui:systems,rui:generalized}, the
harmonic analysis on noncompact quantum groups
\cite{bult:ruijsenaars,koe-sto:askey-wilson,sto:askey-wilson}, and the study of the
bispectral problem
\cite{gru-hai:some,hai-ili:askey-wilson}.

The main purpose of the present work is to provide an explicit
determinantal evaluation formula for an ${}_8\Phi_7$ basic
hypergeometric series representing the Askey-Wilson function at parameter
values given, up to a sign, by powers $q^k$ with $k$ a positive integer or
half-integer. For values of the spectral parameter in the polynomial spectrum, the corresponding
Askey-Wilson polynomials were identified in \cite[Section 8]{spi-zhe:discrete} as the family of all Askey-Wilson polynomials that can be retrieved from the Chebyshev polynomials of the second kind by means of a finite number of Darboux transformations.
Our determinantal formula arises by
making contact with the theory of the (infinite) Toda chain
\cite{die:sato,die:dynamics,fla:toda,fad-tak:hamiltonian,ges-hol-sim-zha:toda,tes:jacobi,tod:theory}. Specifically, we identify the pertinent Askey-Wilson
functions as particular instances of the solitonic
Baker-Akhiezer function for the Toda chain corresponding to particular sets of reflectionless
spectral data. An essential ingredient for establishing the link
between the Baker-Akhiezer function for the Toda chain on the one
hand, and the Askey-Wilson function at special (reflectionless) parameter values on
the other hand, hinges on a remarkable factorization of the tau
function for the Toda chain at the spectral data of
interest. By means of this factorization, it is seen that the Askey-Wilson operator becomes---at
the parameter values in question and upon an exponential change of
variable---a reflectionless Jacobi operator of soliton type, and the
corresponding Askey-Wilson function amounts to its Baker-Akhiezer function (which is
what accounts for the elementary nature of the evaluation formula). For a very special one-parameter subfamily of
Askey-Wilson functions and for the degenerate case of the Gauss hypergeometric
series, analogous determinantal evaluation formulas were found
previously in \cite{die-kir:formulas} and
\cite{die-kir:determinantal}, respectively (cf. also
\cite{gal:new,gal:wronskian,mat:functional-difference} for an alternative approach based on Wronskian- and Casorati-type determinants stemming from Darboux transformations).
Moreover, closely related parameter subfamilies of the Askey-Wilson functions arise in connection with
the study of the bispectral problem \cite{hai-ili:askey-wilson,ili:bispectral}.
The upshot is that for the complete discrete family of reflectionless parameter values indicated above,
our determinantal formulas below provide a closed expression for the Askey-Wilson function in terms of elementary functions.

The paper is organized as follows. Section \ref{sec2} collects some essential
preliminaries regarding the Askey-Wilson function. As our main result, Section
\ref{sec3} states the determinantal evaluation formulas for
the Askey-Wilson function 
 (cf. Theorems \ref{determinant:thm} and \ref{evaluation:thm}), together with the underlying
product formula for the tau function of the Toda chain
at the corresponding spectral data (cf. Theorem \ref{awc:thm}). The proof
of the determinantal- and evaluation formulas is given in Section
\ref{sec4}. This proof hinges on the product formula for the tau
function, whose proof is relegated to Section \ref{sec5}. Let us stress that the product formula in question
constitutes an evaluation of a Cauchy-type determinant that is of
independent interest. Its justification in Section \ref{sec5} is elementary and
self-contained, in the sense that it appeals neither to the theory of the
Askey-Wilson functions nor to that of the Toda chain. Specifically, the proof implements Krattenthaler's
`identification of factors' method \cite[Section 2.4]{kra:advanced} for computing the determinant;
it rests on a representation of the tau function as a ratio of alternants, combined with an
analysis of the zeros of the tau function based on a reflection symmetry of the
columns of the alternant.

\begin{note}{\em i)}
Throughout this paper we will use the conventions that empty sums vanish and that
empty products are equal to $1$.
\end{note}

\begin{note}{\em ii)}
We will use the following standard notations from the theory of
basic hypergeometric series \cite{gas-rah:basic}.
The $q$-shifted factorials are denoted by
\begin{equation*}
(a;q)_k:=\begin{cases}
1 &\text{for}\; k=0, \\
(1-a)(1-aq)\cdots (1-aq^{k-1}) &\text{for}\; k=1,2,3,\ldots ,
\end{cases}
\end{equation*}
with the convention that $(a;q)_\infty:=\prod_{n=0}^\infty
(1-aq^n)$ (for $|q|<1$). Products of $q$-shifted factorials are
abbreviated in the usual way via
\begin{equation*}
(a_1,\ldots ,a_{r};q)_k:=(a_1;q)_k\cdots (a_r;q)_k.
\end{equation*}

The ${}_{r+1}\Phi_r$
basic hypergeometric series is defined as
\begin{equation*}
{}_{r+1}\Phi_r \left[ \begin{array}{c} a_1,\ldots ,a_{r+1} \\
b_1,\ldots ,b_r  \end{array}  \mid q;z  \right] :=
\sum_{k=0}^\infty
\frac{(a_1,\ldots ,a_{r+1};q)_k}{(q,b_1,\ldots ,b_{r};q)_k} z^k ,
\end{equation*}
where it is assumed that the parameters are such that denominators
do not vanish and that $q,z$ lie inside the unit disc, so as to ensure
the convergence of the power series.
An important special case is formed by the ${}_{r+1}W_{r}$
very-well-poised basic
hypergeometric series
\begin{align*}
{}_{r+1}W_{r}(a_0;a_1,\ldots , a_{r-2}\mid q;z)  &:=
 {}_{r+1}\Phi_{r} \left[
\begin{array}{c} a_0,q\sqrt{a_0},-q\sqrt{a_0},a_1,\ldots ,a_{r-2} \\
\sqrt{a_0},-\sqrt{a_0},qa_0/a_1,\ldots ,qa_0/a_{r-2}  \end{array}  \mid q;z
 \right] \\
&=  \sum_{k=0}^\infty \frac{1-a_0q^{2k}}{1-a_0}
\frac{(a_0,a_1,\ldots ,a_{r-2};q)_k}{(q,qa_0/a_1,\ldots
,qa_0/a_{r-2};q)_k} z^k .
\end{align*}
\end{note}

\section{Preliminaries}\label{sec2}
Askey-Wilson functions have been thoroughly studied from diverse angles in
\cite{ata-sus:difference,bult:ruijsenaars,gru-hai:some,hai-ili:askey-wilson,ism-rah:associated,koe-sto:askey-wilson,len:non-polynomial,rah:askey,rou:virasoro,rui:systems,rui:generalized,rui:generalized2,rui:parameter,sto:askey-wilson,sus:some1,sus:some2}.
In this section we have collected the main properties of the
Askey-Wilson functions that are needed for our purposes. 

Let $0<q<1$ and let $a,b,c,d$ denote four real parameters such that
$abcd$ is positive. For convenience, we distinguish (real-valued) dual parameters
$\hat{a},\hat{b},\hat{c},\hat{d}$
related to $a,b,c,d$ via the transformation
\begin{equation}\label{dual-par}
\hat{a}=\sqrt{q^{-1}abcd},\;\; \hat{b}=ab/\hat{a},\;\;\hat{c}=ac/\hat{a},
\;\;\hat{d}=ad/\hat{a}.
\end{equation}
The eigenvalue equation for the Askey-Wilson operator is given by the
$q$-difference equation
\begin{subequations}
\begin{align}
 A(z)A (q^{-1}z^{-1})\psi(qz,\hat{z})-\left(A(z)+A(z^{-1})\right)\psi(z,\hat{z})+
\psi(q^{-1}z,\hat{z}) & \nonumber \\
 = (\hat{z}+\hat{z}^{-1}-\hat{a}-\hat{a}^{-1})\psi(z,\hat{z}) , & \label{aw-ev1}
\end{align}
with
\begin{equation}\label{aw-ev2}
A (z)=\frac{(1-az)(1-bz)(1-cz)(1-dz)}{\hat{a}(1-z^2)(1-qz^2)} .
\end{equation}
\end{subequations}
Here $z$ is the geometric variable, $\hat{z}$ is the spectral
parameter, and we have gauged our wave function $\psi (z,\hat{z})$
such that the coefficient of $\psi (q^{-1}z,\hat{z})$ in the
eigenvalue equation is equal to $1$. From \cite[Equation
(1.13)]{ism-rah:associated}, one readily deduces the following
solution to the eigenvalue equation \eqref{aw-ev1}, \eqref{aw-ev2}
\begin{subequations}
\begin{align}
\psi (z,\hat{z};a,b,c,d \mid q ) &= q^{\log_q(z)\log_q(\hat{z})}
\frac{(\hat{a}\hat{z},qaz\hat{z}/\hat{a},qbz\hat{z}/\hat{a},qcz\hat{z}/\hat{a},
qdz\hat{z}/\hat{a} ;q)_\infty}{(q\hat{z}^2,q^2z^2\hat{z}/\hat{a};q)_\infty}
  \nonumber \\
& \times
{}_8W_7(qz^2\hat{z}/\hat{a};qz/a,qz/b,qz/c,qz/d,q\hat{z}/\hat{a}
\mid q;\hat{a}\hat{z}), \label{awf}
\end{align}
for generic $z,\hat{z}$ such that $|\hat{a}\hat{z}|<1$. (Here $\log_q(z):=\log (z)/\log (q)$.)
We will
refer to $\psi (z,\hat{z};a,b,c,d \mid q )$ \eqref{awf} as the {\em
Askey-Wilson function}.
This Askey-Wilson function is (apart from the different choice
of the gauge) not
exactly the same as the one considered by Suslov
\cite{sus:some1,sus:some2} and by Koelink-Stokman \cite{koe-sto:askey-wilson},
which corresponds in fact to a linear combination of the solutions
$\psi (z,\hat{z};a,b,c,d \mid q )$ and $\psi (z,\hat{z}^{-1};a,b,c,d \mid q )$
with $q$-periodic coefficients.
In the terminology of Rahman
\cite{rah:askey}, the Askey-Wilson function employed by Suslov and
by Koelink-Stokman is of the {\em first kind}
(corresponding---in the gauge of \cite{koe-sto:askey-wilson,sus:some1,sus:some2}---
to a $z\to z^{-1}$ parity-invariant
solution of the eigenvalue
equation), whereas here we will be dealing rather with an Askey-Wilson
function of the {\em second kind} (corresponding to a (Jost) 
asymptotic plane wave solution of the eigenvalue equation).

Following \cite{koe-sto:askey-wilson,sus:some1,sus:some2}, we can extend the
Askey-Wilson function \eqref{awf} to the product of the plane wave
$q^{\log_q(z)\log_q(\hat{z})}$ and a meromorphic expression in
$z,\hat{z}$ through Bailey's three-term transformation \cite[Equation
(III.36)]{gas-rah:basic}:
\begin{align}
\psi (z,\hat{z};a,b,c,d \mid q ) &= q^{\log_q(z)\log_q(\hat{z})}
\frac{(az,bz,\hat{a}\hat{z},\hat{b}\hat{z},qcz\hat{z}/\hat{a},
qdz\hat{z}/\hat{a};q)_\infty}{(qz^2,q\hat{z}^2,ab/q;q)_\infty} \nonumber\\
& \times {}_4\Phi_3 \left[
\begin{array}{c} qz/a,qz/b,q\hat{z}/\hat{a},q\hat{z}/\hat{b} \\
q ^2/ab,qcz\hat{z}/\hat{a},qdz\hat{z}/\hat{a} \end{array}  \mid q;q\right]
\; + \nonumber \\
& q^{\log_q(z)\log_q(\hat{z})}
\frac{(qz/a,qz/b,q\hat{z}/\hat{a},q\hat{z}/\hat{b},q\hat{a}z\hat{z}/c,
q\hat{a}z\hat{z}/d;q)_\infty}{(qz^2,q\hat{z}^2,q/ab;q)_\infty}  \nonumber
\\
& \times {}_4\Phi_3 \left[
\begin{array}{c} az,bz,\hat{a}\hat{z},\hat{b}\hat{z} \\
ab,q\hat{a}z\hat{z}/c,q\hat{a}z\hat{z}/d \end{array}  \mid
q;q\right] . \label{awf2}
\end{align}
\end{subequations}

\begin{remark}\label{r21}
From the ${}_4\Phi_3$ representation \eqref{awf2} it is clear that the Askey-Wilson
function satisfies the duality symmetry
(cf. \cite{hai-ili:askey-wilson,ili:bispectral,koe-sto:askey-wilson,rui:systems,rui:generalized})
\begin{equation}\label{dual-sym}
\psi (z,\hat{z};a,b,c,d \mid q )=\psi
(\hat{z},z;\hat{a},\hat{b},\hat{c},\hat{d} \mid q) .
\end{equation}
This duality symmetry implies that the Askey-Wilson function $\psi
(z,\hat{z};a,b,c,d \mid q )$ satisfies a dual $q$-difference equation in
the spectral parameter $\hat{z}$ of the form in
\eqref{aw-ev1}, \eqref{aw-ev2}, but with coefficients in which $z$ and $a,b,c,d$ are interchanged with
$\hat{z}$ and $\hat{a},\hat{b},\hat{c},\hat{d}$, respectively. In
other words, the Askey-Wilson function constitutes a kernel solving the corresponding
bispectral problem \cite{gru-hai:some,hai-ili:askey-wilson}.
\end{remark}

\begin{remark}\label{r22}
It is manifest from the very-well-poised ${}_8\Phi_7$ representation \eqref{awf} that the Askey-Wilson function $\psi (z,\hat{z};a,b,c,d \mid q )$
is symmetric in the parameters $a$, $b$, $c$ and $d$.
As it turns out, our Askey-Wilson function also exhibits an additional
invariance with respect to parameter reflections of the type
$e\rightarrow q/e$ applied simultaneously to an even number of the parameters
$a$, $b$, $c$, $d$. To see this it is sufficient (in view of the permutation symmetry)
to check this invariance for say $c\rightarrow q/c$, $d\rightarrow q/d$;
in this case the invariance can be easily inferred by means
of the duality symmetry \eqref{dual-sym}, upon observing that in the dual picture
the pairwise parameter reflection at issue translates into a permutation of the
dual parameters
$\hat{a}\leftrightarrow \hat{b}$, $\hat{c}\leftrightarrow \hat{d}$.
The permutations and pairwise reflections generate an action of the Weyl group
$D_4$ in the parameter space. We thus conclude that the above Askey-Wilson function
$\psi (z,\hat{z};a,b,c,d \mid q )$ is invariant with respect to this $D_4$ action
(cf. \cite{rui:generalized2,rui:parameter}).
\end{remark}

\section{Elementary Askey-Wilson Functions}\label{sec3}
In this section we present our main result: a determinantal
evaluation formula for the Askey-Wilson functions in terms of elementary
functions. The formula is valid when each of the parameters $a,b,c,d$---in
which $\psi$ is symmetric---equals, up to a sign, a power $q^k$ with $k$ a
positive integer or half-integer, and the four types (positive/negative sign,
integer/half-integer power) each occur exactly \emph{once} (cf. \eqref{int-par} below).

For instance, for the following special choice of parameters $a=q$, $b=-q$,
$c=q^{1/2}$, $d=-q^{1/2}$, the difference equation
\eqref{aw-ev1}, \eqref{aw-ev2} has constant coefficients. Indeed,
one then has that $\hat{a}=q$, $A(z)A(q^{-1}z^{-1})=1$, and
$A(z)+A(z^{-1})=q+q^{-1}$, whence the difference
equation trivializes to $\psi (qz)+\psi
(q^{-1}z)=(\hat{z}+\hat{z}^{-1})\psi(z)$.
The Askey-Wilson function $\psi (z,\hat{z};a,b,c,d \mid q)$ should thus reduce
in this situation to a linear combination of the plane waves
$q^{\log_q(z)\log_q(\hat{z})}$ and $q^{-\log_q(z)\log_q(\hat{z})}$
(with possibly $z$-dependent $q$-periodic coefficients).
In fact, it is not difficult to see that
\begin{equation}\label{trivial}
\psi (q^x,q^{\hat{x}}; q, -q,
q^{1/2},-q^{1/2} \mid q)=q^{x\hat{x}}.
\end{equation}
Indeed, for $x\to+\infty$ the Askey-Wilson
function \eqref{trivial} behaves as the
plane wave $q^{x\hat{x}}$ (cf. \eqref{plane-wave} below);
by (temporarily) picking $\hat{x}$ on the imaginary axis, this reveals that in the
decomposition of the Askey-Wilson function
$\psi (q^x,q^{\hat{x}}; q, -q, q^{1/2},-q^{1/2} \mid q)$
in plane waves the coefficient of $q^{-x\hat{x}}$ vanishes and the
coefficient of $q^{x\hat{x}}$ is equal to $1$.

Our principal aim is now to present a generalization of this
reduction formula for the Askey-Wilson function involving
parameters of the form
\begin{subequations}
\begin{equation}\label{int-par}
a=q^{a_- +1},\quad b=-q^{a_+ +1},\quad c=q^{b_- +1/2},\quad
d=-q^{b_+ +1/2},
\end{equation}
where $a_\pm,b_\pm\in\mathbb{N}_0:=\{0,1,2,\ldots \}$,
and thus
\begin{equation}\label{dual-int-para}
\hat{a}=q^{\hat{a}_- +1},\quad \hat{b}=-q^{\hat{a}_+ +1},\quad
\hat{c}=q^{\hat{b}_- +1/2},\quad
\hat{d}=-q^{\hat{b}_+ +1/2},
\end{equation}
with
\begin{equation}\label{dual-int-parb}
\begin{array}{ll}
\hat{a}_-=(a_- +a_+ + b_- +b_+)/2, &
\hat{a}_+=(a_- +a_+ - b_- -b_+)/2,\\ [1ex]
\hat{b}_-=(a_- -a_+ + b_- -b_+)/2, &
\hat{b}_+=(a_- -a_+ - b_- +b_+)/2.
\end{array}
\end{equation}
\end{subequations}
The fact that the Askey-Wilson function indeed reduces to an elementary
function at parameter values of the form in
\eqref{int-par} can already be anticipated from a construction of the eigenfunctions
of the Askey-Wilson operator by means of
{\em Darboux transformations}
\cite{spi-zhe:discrete,hai-ili:askey-wilson} or by means of {\em shift operators}
\cite{kal-mil:symmetry,cha:macdonald} (see also \cite{rui:parameter}).
Indeed,
iterated application of a suitable sequence of shift operators on
the plane wave $q^{x\hat{x}}$ yields a construction of the
Askey-Wilson function for such discrete parameter values in terms of
elementary functions. In practice, however, already at small
integral values for $a_+$, $b_+$, $a_-$, $b_-$ the formulas
for the elementary Askey-Wilson functions thus obtained become
exceedingly cumbersome, and no closed expressions covering this entire family
of parameter values were available. Theorem
\ref{determinant:thm} and Theorem \ref{evaluation:thm} below
provide such closed formulas for the elementary Askey-Wilson
functions at issue; they cover the parameter family \eqref{int-par}
in its entirety and uniformly in $a_\pm$, $b_\pm$, rather than case by case.
The proof of these formulas is relegated to Section
\ref{sec4} and onwards.

To describe the formulas in question some notation is needed.
Let us define diagonal matrices $\mathbf{A}(\hat{x})$ and $\mathbf{B}(x)$ of the form
\begin{subequations}
\begin{equation}
\mathbf{A}(\hat{x}) =
\left[ \begin{array}{ll}
\mathbf{A}^+(\hat{x}) & \mathbf{0} \\
\mathbf{0} & \mathbf{A}^-(\hat{x})
\end{array} \right] ,
\qquad
\mathbf{B}(x) =
\left[ \begin{array}{ll}
\mathbf{B}^+(x) & \mathbf{0} \\
\mathbf{0} & \mathbf{B}^-(x)
\end{array} \right]  ,
\end{equation}
with
\begin{equation}
\mathbf{A}^+(\hat{x}) =\text{diag}\left[
\frac{1-q^{\hat{x}-j/2}}{1-q^{\hat{x}+j/2}} \right]_{j\in I^+} ,
\qquad
\mathbf{A}^- (\hat{x})=\text{diag}\left[
\frac{1+q^{\hat{x}-j/2}}{1+q^{\hat{x}+j/2}} \right]_{j\in I^-} ,
\end{equation}
\end{subequations}
and
\begin{subequations}
\begin{align}
\mathbf{B}^{+}(x) = & \\
& \text{diag} \left[ \epsilon^+_jq^{j(x+1/2)}(1-q^j)
\prod_{k\in I^+,\, k\neq j} \left| \frac{1-q^{(j+k)/2}}{q^{j/2}-q^{k/2}} \right|
\prod_{k\in I^-}  \frac{1+q^{(j+k)/2}}{q^{j/2}+q^{k/2}}
  \right]_{j\in I^+} , \nonumber
\end{align}
\begin{align}
\mathbf{B}^{-}(x) = & \\
&  \text{diag} \left[ \epsilon^-_jq^{j(x+1/2)}(1-q^j)
\prod_{k\in I^+}  \frac{1+q^{(j+k)/2}}{q^{j/2}+q^{k/2}}
\prod_{k\in I^-,\, k\neq j} \left|
\frac{1-q^{(j+k)/2}}{q^{j/2}-q^{k/2}} \right| \right]_{j\in I^-} .
\nonumber
\end{align}
\end{subequations}
Here $I^+$ and $I^-$ denote index sets of the form
\begin{subequations}
\begin{equation}
 I^\pm :=
 \{ 1,2,\ldots , |n^\pm_+ -n^\pm_-|,|n^\pm_+ -n^\pm_-|+2,\ldots,n^\pm_+ +n^\pm_-\}  ,
\end{equation}
parametrized by four nonnegative integers
$n^+_+$, $n^+_-$, $n^-_+$, $n^-_-$,
and $\epsilon^+_j$ ($j\in I^+$), $\epsilon^-_j$ ($j\in I^-$) are associated
sign configurations
\begin{equation}
\epsilon_j^\pm  =
\begin{cases}
(-1)^{\eta^\pm_j+j}& \text{if}\ n^\pm_+\geq n^\pm_-\\
(-1)^{\eta^\pm_j}&\text{if}\ n^\pm_+ < n^\pm_-
\end{cases}
\qquad (j\in I^\pm ),
\end{equation}
with $\eta^+_j$ and $\eta^-_j$ counting the position of $j$ in
respectively $I^+$ and $I^-$
\begin{equation}
\eta^\pm_j =
\begin{cases}
j &\text{for}\ j\leq | n^\pm_+-n^\pm_-  | \\
(j +| n^\pm_+-n^\pm_-  | )/2 &\text{for}\ j > | n^\pm_+-n^\pm_-  |
\end{cases} .
\end{equation}
\end{subequations}
Finally, we denote by $\mathbf{C}$ the Cauchy matrix
\begin{subequations}
\begin{equation}
\mathbf{C} =
\left[ \begin{array}{ll}
\mathbf{C}_{++} & \mathbf{C}_{+-} \\
\mathbf{C}_{-+} & \mathbf{C}_{--}
\end{array} \right] ,
\end{equation}
with blocks of the form
\begin{equation}
\begin{array}{ll}
\mathbf{C}_{++}=[ (1-q^{(j+k)/2})^{-1} ]_{j,k\in I^+} , &
\mathbf{C}_{+-}=[ (1+q^{(j+k)/2})^{-1} ]_{j\in I^+,k\in I^-} ,\\ [1.5ex]
\mathbf{C}_{-+}=[ (1+q^{(j+k)/2})^{-1} ]_{j\in I^- ,k\in I^+} , &
\mathbf{C}_{--}=[ (1-q^{(j+k)/2})^{-1} ]_{j,k\in I^-} .
\end{array}
\end{equation}
\end{subequations}

After these notational preliminaries we are now in a position to formulate
the main results of this paper.

\begin{theorem}[Determinantal Formula]\label{determinant:thm}
Let $\hat{x}\in i\mathbb{R}$ and let $x\in\mathbb{R}$ be generic in the sense that
$\det [\mathbf{1}+\mathbf{B}(x)\mathbf{C}]\neq 0$.
For $a_\pm, b_\pm\in\mathbb{N}_0$,
the Askey-Wilson function \eqref{awf}, \eqref{awf2} with parameters \eqref{int-par}
admits the following determinantal representation:
\begin{subequations}
\begin{equation}
\psi (q^x,q^{\hat{x}}; q^{a_- +1}, -q^{a_+ +1}, q^{b_- +1/2}, -q^{b_+ +1/2} \mid q)=
q^{x\hat{x}} \frac{\det [\mathbf{1} +\mathbf{A}(\hat{x}) \mathbf{B}(x)\mathbf{C}]}
                  {\det [\mathbf{1}+\mathbf{B}(x)\mathbf{C}]} ,
\end{equation}
where the parameters
$n^+_+$, $n^+_-$, $n^-_+$, $n^-_-$
are related to the parameters
$a_\pm$, $b_\pm$ via
\begin{equation}
n^+_\pm = a_\pm+b_\pm
\quad\text{and}\quad
n^-_\pm =
\begin{cases}
a_\pm-b_\pm ,& \text{if}\ \ a_\pm\geq b_\pm , \\
b_\pm-a_\pm -1 ,&
\text{if}\ \ a_\pm< b_\pm .
\end{cases}
\end{equation}
\end{subequations}
\end{theorem}

Expansion of the Cauchy determinants in Theorem \ref{determinant:thm}
leads us to the following explicit evaluation formula
for the elementary Askey-Wilson functions.

\begin{theorem}[Evaluation Formula]\label{evaluation:thm}
The explicit evaluation of the Askey-Wilson function from Theorem \ref{determinant:thm}
is given in terms of elementary functions by
\begin{subequations}
\begin{equation}
\psi (q^x,q^{\hat{x}}; q^{a_- +1}, -q^{a_+ +1}, q^{b_- +1/2},-q^{b_+ +1/2} \mid q)=
q^{x\hat{x}} \frac{\chi (x,\hat{x})}{\tau (x)} ,
\end{equation}
where
\begin{align}
\lefteqn{ \chi (x,\hat{x})=
\sum_{\begin{subarray}{c} J^+\subset I^+ \\ J^-\subset I^-  \end{subarray}}
\prod_{j\in J^+} \epsilon^+_j q^{jx}
\Bigl(\frac{q^{j/2}-q^{\hat{x}}}{1-q^{\hat{x}+j/2}}\Bigr)
\prod_{j\in J^-} \epsilon^-_j q^{jx}
\Bigl(\frac{q^{j/2}+q^{\hat{x}}}{1+q^{\hat{x}+j/2}} \Bigr) } &  \\
& \makebox[4em]{}\times\prod_{\begin{subarray}{c} j\in J^+,\, k\in I^+\setminus J^+ \\
 \text{or} \\j\in J^-,\, k\in I^-\setminus J^-  \end{subarray}}
 \left| \frac{1-q^{(j+k)/2}}{q^{j/2}-q^{k/2}} \right|
\prod_{\begin{subarray}{c} j\in J^+,\, k\in I^-\setminus J^- \\
\text{or} \\j\in J^-,\, k\in I^+\setminus J^+ \end{subarray}}
 \frac{1+q^{(j+k)/2}}{q^{j/2}+q^{k/2}} \nonumber
\end{align}
and
\begin{align}
 \tau (x)&=
\sum_{\begin{subarray}{c} J^+\subset I^+ \label{AWtauexp}
\\ J^-\subset I^-  \end{subarray}}
\prod_{j\in J^+} \epsilon^+_j q^{j(x+1/2)}
\prod_{j\in J^-} \epsilon^-_j q^{j(x+1/2)} \\
& \makebox[4em]{}\times
\prod_{\begin{subarray}{c} j\in J^+,\, k\in I^+\setminus J^+ \\
\text{or} \\j\in J^-,\, k\in I^-\setminus J^-  \end{subarray}}
 \left| \frac{1-q^{(j+k)/2}}{q^{j/2}-q^{k/2}} \right|
\prod_{\begin{subarray}{c} j\in J^+,\, k\in I^-\setminus J^- \\
\text{or} \\j\in J^-,\, k\in I^+\setminus J^+ \end{subarray}}
 \frac{1+q^{(j+k)/2}}{q^{j/2}+q^{k/2}}  \nonumber \\
 &=
 \prod_{1\leq\ell\leq a_+} (1+q^{x+\ell})^{a_+ +1-\ell}(1+q^{x+1-\ell})^{a_+ +1-\ell}
 \nonumber \\
&\times
\prod_{1\leq\ell\leq b_+} (1+q^{x+1/2+\ell})^{b_+ -\ell}(1+q^{x+3/2-\ell})^{b_+ +1-\ell}
\nonumber \\
&\times
\prod_{1\leq\ell\leq a_-} (1-q^{x+\ell})^{a_- +1-\ell}(1-q^{x+1-\ell})^{a_- +1-\ell}
\nonumber\\
&\times
\prod_{1\leq\ell\leq b_-} (1-q^{x+1/2+\ell})^{b_- -\ell}(1-q^{x+3/2-\ell})^{b_- +1-\ell} .
\label{AWtaufac}
\end{align}
\end{subequations}
\end{theorem}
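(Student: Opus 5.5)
The plan is to obtain Theorem \ref{evaluation:thm} from Theorem \ref{determinant:thm} in two parts. The first part is a principal-minor expansion of the two determinants. The second part is the product formula \eqref{AWtaufac} for $\tau(x)$, which is the Cauchy-type determinant evaluation announced in the introduction (Theorem \ref{awc:thm}).

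For the expansion, use that for any diagonal matrix $\mathbf{D}$ one has
\[
\det[\mathbf{1}+\mathbf{D}\mathbf{C}]=\sum_{J}\det(\mathbf{D}_J)\det(\mathbf{C}_J),
\]
where the sum runs over all subsets $J=J^+\cup J^-$ of the index set $I^+\sqcup I^-$ and the subscript denotes the principal submatrix. Each principal minor $\mathbf{C}_J$ is again a Cauchy matrix. To see this, write $1\mp q^{(j+k)/2}=(\sigma_j u_j)^{-1}\bigl(\sigma_j u_j-\sigma_k u_k^{-1}\bigr)$ with $u_j=q^{-j/4}$ (sign $\sigma_j=+$ for $j\in I^+$, $-$ for $I^-$; the factor $\sigma_j^2=1$ absorbs the sign correctly). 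Cauchy's formula then gives
\[
\det \mathbf{C}_J=\prod_{j\in J}\frac{1}{1-\sigma_j\sigma_j q^{j}}\;\prod_{\substack{j<k\\ j,k\in J}}\frac{(q^{j/2}-\sigma_j\sigma_kq^{k/2})^2}{(1-\sigma_j\sigma_kq^{(j+k)/2})^2},
\]
up to a power of $q$ and a sign, both of which I will track explicitly.

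Next I multiply by $\det \mathbf{B}_J(x)$. The factors $\prod_{k\ne j}$ in $\mathbf{B}$ run over all of $I^\pm$. Squared factors with $k\in J$ cancel against the Cauchy minor, and the residual factors with $k\notin J$ produce exactly the cross products over $j\in J$, $k\in I\setminus J$ in $\chi$ and $\tau$. The diagonal factor $(1-q^j)$ cancels $1/(1-q^j)$, and the $q$-powers $q^{j/2}$ combine with the Cauchy prefactors. The remaining work is sign bookkeeping: I must check that the absolute values in $\mathbf{B}$ together with $\epsilon_j^\pm$ reproduce the stated weights $\prod\epsilon_j$. This only requires that the sign of $\prod_{j<k}(q^{j/2}-q^{k/2})$ over $J$, squared, is positive, so the absolute values are consistent.

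With $\mathbf{A}(\hat x)$ inserted, each $j\in J$ acquires the factor $(1-q^{\hat x-j/2})/(1-q^{\hat x+j/2})$. Combined with $q^{j(x+1/2)}$ this gives $q^{jx}(q^{j/2}-q^{\hat x})/(1-q^{\hat x+j/2})$, and similarly with $+$ signs for $I^-$. This yields the two sum formulas, so Theorem \ref{determinant:thm} gives $\psi=q^{x\hat x}\chi/\tau$.

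The main obstacle is the factorization \eqref{AWtaufac}, and I would follow Krattenthaler's identification of factors.
\begin{itemize}
\item Write $\tau$ as a ratio of alternants: $\det[\mathbf{1}+\mathbf{B}\mathbf{C}]$ is a polynomial in $t=q^x$ whose degree is bounded by $\sum_{j\in I^+\cup I^-}j$.
\item Check that this degree matches the total multiplicity in \eqref{AWtaufac}, with both sides having constant term $1$ at $t=0$, i.e.\ $J=\emptyset$.
\item Show that $t=\mp q^{-\ell}$, $\mp q^{\ell-1}$ and the half-integer analogues are zeros of the stated multiplicities. I would attempt this by exhibiting, at each such $t$, linearly independent null vectors of the alternant of the required number. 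These arise from a reflection symmetry $j\mapsto -j$ of the columns, under which the entries $q^{jx}$ become pairwise proportional at the special points $x$.
\end{itemize}
Counting zeros with multiplicity then forces equality. Establishing the exact multiplicities, rather than mere vanishing, is the delicate step. If the direct null-vector count falls short, I would fall back on induction in $a_\pm,b_\pm$ via the Darboux/shift-operator structure.
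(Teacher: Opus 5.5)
Your overall plan follows the paper. The sums for $\chi$ and $\tau$ come from the principal-minor expansion together with the Cauchy formula, and this part of your proposal is fine: one has exactly $\det\mathbf{C}_J=\prod_{j\in J}(1-y_j^2)^{-1}\prod_{j<k}(y_j-y_k)^2(1-y_jy_k)^{-2}$ with $y_j=\pm q^{j/2}$, and your $u_j$ should be $q^{-j/2}$. The product formula is then to be obtained by identification of factors, as in the paper. However, the factorization is the only nontrivial part, and you have not established it. The two steps that carry its content are missing, and the null-vector mechanism you describe does not work as stated.

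\textbf{The alternant representation.} $\tau$ is not a ratio of alternants on its face. Its expansion involves $|R_{ii'}|$ and the signs $\epsilon^\pm_j$, whereas $\det[z_r^c-\alpha_rz_r^{N-1-c}]/\Delta(z)$ expands as $\sum_{\mathcal{J}}(-1)^{\binom{|\mathcal{J}|+1}{2}}\prod\alpha_r\prod R_{rr'}$, with no absolute values. You have to count the negative $R_{ii'}$ (Lemma \ref{signs:lem}). You then observe that $\epsilon^\pm_j(-1)^{\eta^\pm_j}q^{j(x+1/2)}=(\theta_\pm v)^j$ with $v=-q^{x+1/2}$, up to a compensating factor $(-1)^{N-1}$ on $I^-$ (Proposition \ref{signed:prp}). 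This is exactly where the peculiar sign configuration is used. It yields the matrix with rows $s_\delta^c[p^{jc}-(\theta_\delta v)^jp^{j(N-1-c)}]$, whose symmetry drives everything that follows.

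\textbf{The null-vector count.} At $v_0=\sigma p^m$, columns $c$ and $c'=m+N-1-c$ of this matrix agree only up to a row-dependent sign $\lambda_i=-(\theta_{\delta(i)}\sigma)^{j_i}s_{\delta(i)}^{m+N-1}$. So the reflection acts on the column index and depends on $m$, and the vectors $\mathbf{e}_c\pm\mathbf{e}_{c'}$ are in general \emph{not} in the kernel. What works is a dimension count:
\begin{itemize}
\item symmetric combinations (plus the unpaired column, if any) are mapped into the span of the $R_+$ rows with $\lambda_i=1$;
\item antisymmetric combinations are mapped into the span of the $R_-$ rows with $\lambda_i=-1$;
\item hence the corank is at least $\max\bigl(0,\tfrac12(|F-\Lambda|-|m|)\bigr)$, where $\Lambda=\sum_i\lambda_i$ and $F\in\{0,1\}$ is the parity of $N-|m|$.
\end{itemize}
You must then evaluate $\Lambda$ using $\sum_{j\in I^\delta}(\pm1)^j\in\{N_\delta,\mu_\delta,-1-\mu_\delta\}$. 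Finally, check that $|F-\Lambda|$ equals $k_++k_-+1$ or $|k_+-k_-|$ according to the parity of $m$, where $(k_+,k_-)=(n^+_+,n^-_+)$ for $\sigma=1$ and $(n^+_-,n^-_-)$ for $\sigma=-1$. These are precisely the tent multiplicities.

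Only these lower bounds are needed, since the degree count then forces equality; you never have to prove exact multiplicities. But without this computation your outline is not a proof. Your fallback via Darboux or shift operators is undeveloped and would need its own recursion for $\tau$.
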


In the above theorems
we have picked $x$ real and $\hat{x}$ imaginary, so as to guarantee
the convergence of the ${}_8W_7$ basic hypergeometric
representation for the Askey-Wilson function
 \eqref{awf}. However, both the determinantal formula and the evaluation formula
extend in fact to meromorphic identities
in $x$ and $\hat{x}$, between the ${}_4\Phi_3$ basic hypergeometric representation
of the Askey-Wilson function \eqref{awf2} on the one hand and the
elementary determinantal and evaluation expressions exhibited in the above
theorems on the other hand.

The equality stated in Theorem \ref{evaluation:thm} between the
{\em expansion} and the {\em factorization} of the Askey-Wilson-Cauchy
determinant $\tau(x)=\det [\mathbf{1}+\mathbf{B}(x)\mathbf{C}] $
is a consequence of the following product formula.

\begin{theorem}[Askey-Wilson-Cauchy Determinant]\label{awc:thm}
Let $n^+_+, n^+_-, n^-_+, n^-_-$ be nonnegative and integral. Then
\begin{subequations}
\begin{align}
 \det [\mathbf{1}+\mathbf{B}(x)\mathbf{C}]
 &=
 \prod_{1\leq\ell\leq a_+} (1+q^{x+\ell})^{a_+ +1-\ell}(1+q^{x+1-\ell})^{a_+ +1-\ell}
 \label{tau-prod} \\
&\times
\prod_{1\leq\ell\leq b_+} (1+q^{x+1/2+\ell})^{b_+ -\ell}(1+q^{x+3/2-\ell})^{b_+ +1-\ell}
\nonumber \\
&\times
\prod_{1\leq\ell\leq a_-} (1-q^{x+\ell})^{a_- +1-\ell}(1-q^{x+1-\ell})^{a_- +1-\ell}
\nonumber\\
&\times
\prod_{1\leq\ell\leq b_-} (1-q^{x+1/2+\ell})^{b_- -\ell}(1-q^{x+3/2-\ell})^{b_- +1-\ell} ,
\nonumber
\end{align}
with
\begin{align}
&
\begin{cases}
a_\pm=(n^+_\pm+n^-_\pm)/2 \\
b_\pm=|n^+_\pm-n^-_\pm|/2 \\
\end{cases} ,\qquad\quad
\text{if}\quad n^+_\pm+n^-_\pm\ \text{even},
 \nonumber\\
&
\begin{cases}
a_\pm=(|n^+_\pm-n^-_\pm |-1)/2 \\
b_\pm=(n^+_\pm+n^-_\pm+1)/2 \\
\end{cases},
\quad \text{if}\quad n^+_\pm+n^-_\pm\ \text{odd} . \label{par-rel}
\end{align}
\end{subequations}
\end{theorem}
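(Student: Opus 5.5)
Set $N=|I^+|+|I^-|$ and write the spectral nodes as $z_j=q^{j/2}$ for $j\in I^+$ and $z_j=-q^{j/2}$ for $j\in I^-$. Then every Cauchy entry takes the single form $(1-z_jz_k)^{-1}$, and $\tau(x)=\det[\mathbf{1}+\mathbf{B}(x)\mathbf{C}]$ can be handled uniformly. Put $t=q^{x}$. The first step is to expand the determinant by principal minors, $\det[\mathbf{1}+\mathbf{B}\mathbf{C}]=\sum_J \det \mathbf{B}_J\det\mathbf{C}_J$. Each Cauchy minor is evaluated by
\[
\det\mathbf{C}_J=\prod_{j<k\in J}(z_j-z_k)^2\Big/\prod_{j,k\in J}(1-z_jz_k).
\]
Combining this with the normalisation in $\mathbf{B}$, in particular the factor $(1-q^j)=1-z_j^2$ that cancels the diagonal, yields exactly the subset expansion \eqref{AWtauexp}. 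The signs $\epsilon^\pm_j$ and the absolute values are arranged precisely so that the sign of $\prod(z_j-z_k)^2$ against the absolute value produces a clean sign.

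In this form $\tau$ is a polynomial in $t^{1/2}$, of degree $\sum_{j\in I^+\cup I^-}j$ in $t$. The right-hand side of \eqref{tau-prod} is also a polynomial in $t$. I would check that its degree and its leading coefficient match; the leading coefficient comes from the term $J=I^+\cup I^-$, which is a full Cauchy determinant. Both sides have constant term $1$. This reduces the problem to matching zeros with multiplicity, in the spirit of Krattenthaler's identification of factors.

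The zeros come from a reflection symmetry. I would write $\tau$ as a ratio of alternants: multiplying through by the Cauchy denominators, $\det[\mathbf{1}+\mathbf{B}\mathbf{C}]$ becomes, up to explicit factors, $\det[\,z_k^{m}+ \beta_k t^{k} w_k^{m}\,]$. Here the columns are indexed by the nodes and the rows by $m=0,\dots,N-1$, with $w_k=1/z_k$, which is the standard identity for $\mathbf{1}+D\,\mathbf{C}$ with Cauchy $\mathbf{C}$ of nodes $z_j,1/z_k$. At $t=\pm q^{-\ell}$ or $t=\pm q^{-\ell-1/2}$ (with $\ell$ in the appropriate range), the weight $\beta_kt^{k}w_k^m$ of a column $k$ becomes proportional to the monomial column of a reflected node $z_{k'}^{m}$ with $k'=k\pm 2\ell$ or similar. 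The index sets $I^\pm$ have the "gap" structure $\{1,\dots,|n_+-n_-|\}\cup\{|n_+-n_-|+2,\dots\}$ precisely so that many such coincidences occur. Each coincidence gives a linear dependence among columns, and counting independent dependences gives the vanishing order. I would organise the count by the four sign/parity classes separately, since $1\pm q^{x+\ell}$ and $1\pm q^{x+1/2+\ell}$ arise from the nodes in $I^+$ or $I^-$ of matching parity. The factors $(1\pm q^{x+1-\ell})$ with $\ell\ge1$ correspond to $t$ near $\mp q^{\ell-1}$, and I would obtain these via the symmetry $x\to -x-c$ of $\tau$ up to a monomial factor; this symmetry is the alternant version of complementing $J\mapsto (I^+\cup I^-)\setminus J$ in the expansion.

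The main obstacle is this multiplicity count. It requires showing that at $t=-q^{-\ell}$ exactly $a_++1-\ell$ independent column relations exist, and likewise for the other factors, uniformly across the two parity cases of \eqref{par-rel}. I would first try a rank argument: exhibit $r$ independent vectors in the kernel at the special point, then use the standard fact that a polynomial matrix whose rank drops by $r$ at a point has determinant vanishing to order at least $r$. Summing these lower bounds, the total must already equal the degree of $\tau$. Equality of degrees then forces equality up to the constant, and the constant term $1$ fixes it.
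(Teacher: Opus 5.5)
Your outer frame matches the paper's: subset expansion via Cauchy minors, an alternant representation, polynomiality with constant term $1$, corank implying order of vanishing, and a closing degree count. But the step that carries all the content, a lower bound on the vanishing order equal to the tent multiplicity at each zero, is not supplied, and the mechanism you sketch for it fails. With $w_k=1/z_k$ and every node satisfying $0<|z_k|<1$, the vector $(\beta_k t^k w_k^m)_{0\le m<N}$ can never be proportional to $(z_{k'}^m)_{0\le m<N}$ for another node, since that would force $z_{k'}=1/z_k$. So there are no ``reflected nodes $k'=k\pm 2\ell$'', and the gaps in $I^\pm$ produce no column coincidences.

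What actually happens at $v_0=\sigma p^{m}$ (with $p=q^{1/2}$, $v=-q^{x+1/2}$) is that the weight term of a node becomes $\pm$ the monomial of the \emph{same} node at a reflected exponent. The line of node $i$ (sign $\delta$, label $j$) becomes $s_\delta^{c}\bigl[p^{jc}-\kappa_i p^{j(L-c)}\bigr]$, with $s_\pm=\pm1$, $L=m+N-1$ and $\kappa_i=\pm1$. It is therefore symmetric or antisymmetric (sign $\lambda_i$) under $c\mapsto L-c$ on the window of $h=N-|m|$ exponents for which both $c$ and $L-c$ lie in $\{0,\dots,N-1\}$. The $\lceil h/2\rceil$ symmetric combinations of exponent-indexed lines are confined to the $R_+$ node coordinates with $\lambda_i=+1$. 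Likewise the $\lfloor h/2\rfloor$ antisymmetric ones are confined to the $R_-$ coordinates with $\lambda_i=-1$. This gives corank at least $\max\bigl(0,\tfrac12(|F-\Lambda|-|m|)\bigr)$, where $\Lambda=\sum_i\lambda_i$ and $F=h \bmod 2$.

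The gap structure of $I^\delta$ enters only through these row-sign sums. The sign $\lambda_i$ depends only on $\delta(i)$ and the parity of $j_i$, and $\sum_{j\in I^\delta}(\theta_\delta\sigma)^j\in\{k_\delta,-1-k_\delta\}$, where $\theta_\delta=\pm1$ according as $n^\delta_+\ge n^\delta_-$ or not, and $k_\delta=n^\delta_+$ if $\sigma=1$, $k_\delta=n^\delta_-$ if $\sigma=-1$. Hence $|F-\Lambda|$ equals $k_++k_-+1$ or $|k_+-k_-|$ according to the parity of $m$, which are exactly the two tent sizes.

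This count mixes the rows of $I^+$ and $I^-$: the factors $1+q^{x+\cdot}$ are governed jointly by $n^+_+$ and $n^-_+$. So it cannot be organised by nodes of one index set, as your plan suggests. It also needs the exact signs of your weights $\beta_k$. That means the sign bookkeeping showing that $\epsilon^\pm_j$ together with the absolute values turns each one-point weight into $(\theta_\delta v)^{j}$ against the signed factors $(1-y_iy_{i'})/(y_i-y_{i'})$, with an extra $(-1)^{N-1}$ on $I^-$. Writing ``up to explicit factors'' suppresses precisely the data on which the multiplicities depend. Your appeal to the symmetry $x\mapsto -x-1$ is valid but unnecessary, since the reflection argument treats $m$ and $-m$ alike.
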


\begin{remark}
It is important to emphasize that Theorems \ref{determinant:thm} and \ref{evaluation:thm}
amount to the following explicit evaluation formula for the very-well-poised ${}_8\Phi_7$
basic hypergeometric series under consideration
\begin{subequations}
\begin{align}
& {}_8W_7 (q^{-\hat{a}_- +2x +\hat{x}}; q^{-a_- +x}, -q^{-a_+ +x},
         q^{\frac{1}{2}-b_- +x}, -q^{\frac{1}{2}-b_+ +x}, q^{-\hat{a}_- +\hat{x}}
         \mid q; q^{1+\hat{a}_- +\hat{x}})  \nonumber \\
& = c(x,\hat{x}) \frac{\det [\mathbf{1} +\mathbf{A}(\hat{x}) \mathbf{B}(x)\mathbf{C}]}
                  {\det [\mathbf{1}+\mathbf{B}(x)\mathbf{C}]} \nonumber \\
& = c(x,\hat{x}) \frac{\chi (x,\hat{x})}{\tau (x)} ,
\end{align}
with
\begin{align}
\lefteqn{ c(x,\hat{x}) =} & \\
& \!\!\!\!\!\!\!\!\!\!\frac{(q^{1+2\hat{x}}, q^{1-\hat{a}_- +2x+\hat{x}} ;q)_\infty}
     {(q^{1+\hat{a}_- +\hat{x}}, q^{1+a_- -\hat{a}_- +x+\hat{x}},
       -q^{1+a_+ -\hat{a}_- +x+\hat{x}},
        q^{\frac{1}{2}+b_- -\hat{a}_- +x+\hat{x}},
           -q^{\frac{1}{2}+b_+ -\hat{a}_- +x+\hat{x}} ;q)_\infty}  . \nonumber
\end{align}
\end{subequations}
In particular, for $a_-,a_+, b_-, b_+=0$ the determinantal factors become trivial
and the above formula simplifies to the product form
\begin{equation}\label{trivial-8W7}
 {}_8W_7 (q^{2x +\hat{x}}; q^{x}, -q^{x},
         q^{\frac{1}{2}+x}, -q^{\frac{1}{2} +x}, q^{\hat{x}}
         \mid q; q^{1+\hat{x}})   =
\frac{(q^{1+2\hat{x}},q^{1+2x+\hat{x}};q)_\infty}
     {(q^{1+\hat{x}},q^{1+2x+2\hat{x}};q)_\infty} .
\end{equation}
The latter summation formula can be independently recovered from the following continuous $q$-ultraspherical
reduction of the very-well-poised ${}_8\Phi_7$ series from \cite[Equation (3.2)]{rah:askey}:
\begin{align}\label{rahman}
& {}_8W_7(z^2q^{\lambda+1}; qz/a, q^{\frac12}z/a, -qz/a, -q^{\frac12}z/a, q^{\lambda+1}
 \mid q; a^4q^{\lambda}) \nonumber \\
& = \frac{(z^2q^{\lambda+2}, a^4q^{2\lambda+1};q)_\infty}
         {(a^2q^{\lambda+1}, a^2z^2q^{2\lambda+2};q)_\infty}\;
  {}_2\Phi_1 \left[ \begin{array}{c} qz^2/a^2, q/a^2 \\ qz^2 \end{array} \mid q; a^4q^\lambda \right] .
\end{align}
Indeed, upon substituting
$a=q^{\frac12}$, $z=q^{x}$, and $q^{\lambda+1}=q^{\hat{x}}$
into \eqref{rahman}, the leading parameter, the five permutation-symmetric parameters, and the argument of the ${}_8W_7$ series
become, respectively,
$q^{2x+\hat{x}}$; $q^{\frac12+x}$, $q^{x}$, $-q^{\frac12+x}$, $-q^{x}$, $q^{\hat{x}}$; $q^{1+\hat{x}}$,
so the left-hand side coincides with the ${}_8W_7$ series in \eqref{trivial-8W7}.
Moreover, on the right-hand side of \eqref{rahman} the numerator parameter
$q/a^2$ of the ${}_2\Phi_1$ series becomes equal to $1$, whence
the latter series reduces to its
zeroth term $1$ while the prefactor becomes
$(q^{1+2x+\hat{x}},q^{1+2\hat{x}};q)_\infty/(q^{1+\hat{x}},q^{1+2x+2\hat{x}};q)_\infty$.
\end{remark}

\begin{remark}
In view of the permutation symmetry, Theorems \ref{determinant:thm} and \ref{evaluation:thm} provide evaluation formulas for the Askey-Wilson function
$\psi (q^x,q^{\hat{x}};a,b,c,d \mid q)$ for parameters such that $ \{a,b,c,d\}=\{q^{1+a_-},-q^{1+a_+},q^{\frac{1}{2}+b_-},-q^{\frac{1}{2}+b_+}\} $ with $a_\pm,b_\pm\in\mathbb{N}_0$.
From the $D_4$ parameter symmetry described in Remark \ref{r22} at the end of
Section \ref{sec2}, however, it follows that these two evaluation formulas cover in fact $\psi (q^x,q^{\hat{x}};a,b,c,d \mid q)$ for parameters in the orbit with respect to the action of the $D_4$ reflection group, i.e. for
parameters of the form
\begin{equation}\label{D4-orbit}
  \{a,b,c,d\}
=
\bigl\{ q^{\frac{1}{2}+\varepsilon_1(\frac{1}{2}+a_-)},\,
        -q^{\frac{1}{2}+\varepsilon_2(\frac{1}{2}+a_+)},\,
         q^{\frac{1}{2}+\varepsilon_3 b_-},\,
        -q^{\frac{1}{2}+\varepsilon_4 b_+} \bigr\} ,
\end{equation}
for all $\varepsilon_1,\varepsilon_2,\varepsilon_3,\varepsilon_4\in \{ 1,-1\}$ such that $\varepsilon_1\varepsilon_2\varepsilon_3\varepsilon_4=1$. In other words,
this incorporates the situation that some of the parameters are of the form $\pm q^{k}$ with $k$ a nonpositive integer or half-integer. Specifically,
the formula covers all quadruples of parameter values that are
given, up to a sign, by powers $q^k$ with $k$ an integer or
half-integer, in such a way that each of the four types (positive/negative
sign, integer/half-integer power) occurs exactly once and the number of nonpositive powers is even if $\{a,b,c,d\}\cap \{ q^{1/2},-q^{1/2}\}=\emptyset$.
(Notice that the reflection of a parameter $\pm q^{1/2}$ is trivial, so the parity restriction disappears when such a parameter is present.)
Moreover, from the duality symmetry \eqref{dual-sym} it is clear that through the interchange $x\leftrightarrow \hat{x}$ in (the meromorphic extension of) the formulas of
Theorems \ref{determinant:thm} and \ref{evaluation:thm}, one obtains corresponding evaluation formulas for the
Askey-Wilson function $\psi (q^x,q^{\hat{x}};a,b,c,d \mid q)$ that are valid whenever the dual parameters $\hat{a},\hat{b},\hat{c},\hat{d}$ \eqref{dual-par} are given by a
quadruple of the form just described.
\end{remark}

\section{Proof of the Determinantal Evaluation Formula}\label{sec4}
The proof of the determinantal evaluation formula for the
Askey-Wilson function draws on results from the theory of
integrable systems. In a nutshell, the idea behind the proof is to
identify the elementary Askey-Wilson function for discrete
parameter values of the form \eqref{int-par} with a stationary
solitonic Baker-Akhiezer function for the Toda chain corresponding
to specific reflectionless spectral data. The pertinent results from
Toda theory needed for our purposes are contained in the papers
\cite{die:sato,die:dynamics,ges-hol-sim-zha:toda}; further
relevant background material concerning the Toda chain can be
found in the (standard) texts
\cite{fla:toda,fad-tak:hamiltonian,tes:jacobi,tod:theory}.

Our solitonic Baker-Akhiezer function will be parametrized by $N$ \emph{distinct} points
$\hat{z}_1,\ldots ,\hat{z}_N\in (-1,1)\setminus \{0\}$ and $N$ (not necessarily distinct) auxiliary parameters 
 $\hat{\nu}_1,\ldots ,\hat{\nu}_N\in\mathbb{R}\setminus\{0\}$. We will refer to these $2N$ parameters as \emph{the spectral data of the Baker-Akhiezer function}. 
Specifically, let us define the matrices
\begin{equation}
\mathbf{\hat{Z}}=\text{diag}(\hat{z}_1,\ldots ,\hat{z}_N) \quad\text{and}\quad
\mathbf{\hat{N}}=\left[  \frac{\hat{\nu}_j}{1-\hat{z}_j\hat{z}_k}\right]_{1\leq j,k\leq N} ,
\end{equation}
and employ the notational convention
$|\mathbf{\hat{Z}}|:=\text{diag}(|\hat{z}_1|,\ldots ,|\hat{z}_N|)$.
From the matrices $\mathbf{\hat{Z}}$, $\mathbf{\hat{N}}$ and the identity matrix $\mathbf{1}$, we now construct the
(stationary solitonic) Baker-Akhiezer wave function (of Toda type) $\psi_N(x,\hat{z})$ in the variables $x\in\{ x\in\mathbb{R}\mid \tau_N(x)\neq 0\}$ and $\hat{z}\in\mathbb{C}\setminus\bigl((-\infty,0]\cup\{\hat{z}_1^{-1},\ldots,\hat{z}_N^{-1}\}\bigr)$ as follows:
\begin{equation}\label{baf1}
\psi_N(x,\hat{z}) =  \frac{\hat{z}^x \chi_N(x,\hat{z})}{\tau_N(x)} ,
\end{equation}
where $\hat{z}^x$ is defined by means of the principal branch,
\begin{subequations}
\begin{align}
\chi_N(x,\hat{z}) &=   \det [ \mathbf{1}+(\mathbf{1}-\hat{z}\mathbf{\hat{Z}}^{-1})(\mathbf{1}-\hat{z}\mathbf{\hat{Z}})^{-1}
                                     |\mathbf{\hat{Z}}|^{2x+2}\mathbf{\hat{N}}  ]  \label{baf2a} \\
            &= \sum_{J\subset\{ 1,\ldots ,N\}}
       \prod_{j\in J} \Bigl( \frac{1-\hat{z}\hat{z}_j^{-1}}{1-\hat{z}\hat{z}_j} \Bigr)
       \Bigl(\frac{\hat{\nu}_j|\hat{z}_j|^{2x+2}}{1-\hat{z}_j^2}\Bigr)
       \prod_{\begin{subarray}{c}j,k\in J\\ j<k\end{subarray}}
       \Bigl(\frac{\hat{z}_j-\hat{z}_k}{1-\hat{z}_j\hat{z}_k}\Bigr)^2 ,
\label{baf2}
\end{align}
and
\begin{align}
\tau_N(x)  &=   \det [ \mathbf{1} + |\mathbf{\hat{Z}}|^{2x+2}\mathbf{\hat{N}} ]   \\
           &= \sum_{J\subset\{ 1,\ldots ,N\}}
       \prod_{j\in J} \frac{\hat{\nu}_j|\hat{z}_j|^{2x+2}}{1-\hat{z}_j^2}
       \prod_{\begin{subarray}{c}j,k\in J\\ j<k\end{subarray}}
       \Bigl(\frac{\hat{z}_j-\hat{z}_k}{1-\hat{z}_j\hat{z}_k}\Bigr)^2 .
\label{baf3}
\end{align}
\end{subequations}
In the above formulas, the explicit expansion of the determinants $\chi_N(x,\hat{z})$
and $\tau_N(x)$
hinges on the observation that a determinant of the form $\det (\mathbf{1}+\mathbf{M})$
amounts to the sum of the principal minors of $\mathbf{M}$;
for the matrices under consideration,
these minors are readily evaluated in closed form by means of the celebrated
Cauchy determinant formula
\begin{equation}
\det \left[ \frac{1}{1-x_jy_k} \right]_{1\leq j,k\leq N} =
\frac{\prod_{1\leq j<k\leq N} (x_j-x_k)(y_j-y_k)}{\prod_{1\leq j,k\leq N} (1-x_jy_k)}.
\end{equation}
As a function of the spectral variable $\hat{z}$, the
Baker-Akhiezer function has at most simple poles at $\hat{z}=\hat{z}_j^{-1}$, $j=1,\ldots ,N$, while as a function of the geometric variable $x$ the function is smooth on $\mathbb{R}$, provided the auxiliary parameters $\hat{\nu}_1,\ldots, \hat{\nu}_N$ are all positive. Moreover, in this situation $\hat{\nu}_j$ encodes the reciprocal of the squared norm of $\psi_N(x,\hat{z}_j)$, cf. e.g. \cite[Section VII.B]{die:sato}. Here, however, we allow
that one or more of the parameters $\hat{\nu}_j$ become negative. As a result, our Baker-Akhiezer function $\psi_N(x,\hat{z})$ is not necessarily smooth on $\mathbb{R}$ as a function of $x$,
due to the zero locus of the denominator $\tau_N(x)$.

Let us from now on pick parameters $\hat{\nu}_1,\ldots,\hat{\nu}_N$ of the form
\begin{equation}\label{parity-sym}
\hat{\nu}_j = \hat{\epsilon}_j |\hat{z}_j|^{-1}(1-\hat{z}_j^2)
\prod_{\begin{subarray}{c} 1\leq k\leq N\\ k\neq j\end{subarray}}
\left| \frac{1-\hat{z}_j\hat{z}_k}{\hat{z}_j-\hat{z}_k} \right|
\qquad (j=1,\ldots ,N),
\end{equation}
with $\hat{\epsilon}_1,\ldots ,\hat{\epsilon}_N\in\{ 1,-1\}$. The explicit expansions
for the numerator and denominator of the Baker-Akhiezer wave function
$\psi_N(x,\hat{z})$ \eqref{baf1} then become
\begin{subequations}
\begin{align}
\chi_N(x,\hat{z})  &= \sum_{J\subset\{ 1,\ldots ,N\}}
       \prod_{j\in J} \hat{\epsilon}_j|\hat{z}_j|^{2x+1}
\Bigl( \frac{1-\hat{z}\hat{z}_j^{-1}}{1-\hat{z}\hat{z}_j}\Bigr)
       \prod_{\begin{subarray}{c}j\in J\\ k\in J^c\end{subarray}}
       \Bigl|\frac{1-\hat{z}_j\hat{z}_k}{\hat{z}_j-\hat{z}_k}\Bigr|  ,\label{bafs1} \\
\tau_N(x)  &= \sum_{J\subset\{ 1,\ldots ,N\}}
       \prod_{j\in J} \hat{\epsilon}_j|\hat{z}_j|^{2x+1}
       \prod_{\begin{subarray}{c}j\in J\\ k\in J^c\end{subarray}}
       \Bigl|\frac{1-\hat{z}_j\hat{z}_k}{\hat{z}_j-\hat{z}_k}\Bigr| , \label{bafs2}
\end{align}
\end{subequations}
respectively (where $J^c:=\{ 1,\ldots ,N\}\setminus J $).

It follows from
\cite[Corollary 2, Theorem 3]{die:sato} and \cite[Theorem 1, Equation
(7.13)]{die:dynamics}, combined with \cite[Equations
(3.38),(3.40)]{ges-hol-sim-zha:toda}, that the Baker-Akhiezer wave function
$\psi_N(x,\hat{z})$ \eqref{baf1} with $\chi_N(x,\hat{z})$ and
$\tau_N(x)$ given by \eqref{bafs1} and \eqref{bafs2},
respectively, solves the second-order difference equation
\begin{align}
A_N(x)B_N(x+1)\psi_N(x+1,\hat{z})-\bigl(A_N(x)+B_N(x)\bigr)
\psi_N(x,\hat{z})+\psi_N(x-1,\hat{z}) &  \nonumber \\
 = (\hat{z}+\hat{z}^{-1}-\hat{z}_N-\hat{z}_N^{-1})\psi_N(x,\hat{z})
, & \label{deq1}
\end{align}
with
\begin{subequations}
\begin{align}
A_N(x) &= \hat{z}_N
\frac{\tau_N(x-1)\tau_{N-1}(x+1/2)}{\tau_N(x)\tau_{N-1}(x-1/2)}, \label{deq2}
\\
B_N(x)&= \hat{z}_N^{-1}
\frac{\tau_N(x)\tau_{N-1}(x-3/2)}{\tau_N(x-1)\tau_{N-1}(x-1/2)}
.\label{deq3}
\end{align}
\end{subequations}
The tau function $\tau_N(x)$ \eqref{bafs2} and the difference
equation from \eqref{deq1}, \eqref{deq2}, \eqref{deq3} enjoy
the parity symmetry
\begin{equation}\label{parity}
\tau_N(-x)=  \hat{\epsilon}_1\cdots\hat{\epsilon}_N |\hat{z}_1\cdots
\hat{z}_N|^{1-2x}\tau_N(x-1)\quad\text{and}\quad A_N(-x)=B_N(x).
\end{equation}
Hence, the difference equation under consideration has the same
structure as the Askey-Wilson difference equation
\eqref{aw-ev1}, \eqref{aw-ev2} (up to an exponential change of
variable).
In fact we claim that, for $N=\max(n^+_+,n^+_-)+\max(n^-_+,n^-_-)$ and
\begin{equation}\label{AWdata}
\{ (\hat{z}_1,\hat{\epsilon}_1),\ldots ,(\hat{z}_N,\hat{\epsilon}_N)\} =
\{ (q^{j/2},\epsilon_j^+)\}_{j\in I^+}\cup
\{ (-q^{j/2},\epsilon_j^-)\}_{j\in I^-} ,
\end{equation}
the difference equation in
\eqref{deq1}, \eqref{deq2}, \eqref{deq3} for the Baker-Akhiezer wave function
$\psi_N(x,\hat{z})$ \eqref{baf1}, \eqref{bafs1}, \eqref{bafs2}
coincides with
the Askey-Wilson difference equation from \eqref{aw-ev1}, \eqref{aw-ev2} with
$z=q^x$ and
parameters of the form in \eqref{int-par}.

Before proving this claim, let us first observe that Theorem \ref{determinant:thm} and
Theorem \ref{evaluation:thm} arise as immediate consequences.
Indeed, it is manifest that
for the spectral data in question the Baker-Akhiezer wave function
$\psi_N(x,q^{\hat{x}})$
reduces to the wave functions stated
on the right-hand-sides of the determinantal evaluation formulas in
Theorem \ref{determinant:thm} and Theorem \ref{evaluation:thm}, respectively.
In other words, our claim
implies that these right-hand-side functions satisfy the Askey-Wilson difference equation
(with $z=q^x$, $\hat{z}=q^{\hat{x}}$ and
parameters of the form in \eqref{int-par}).
That this solution of the Askey-Wilson difference equation in fact
coincides with the (basic hypergeometric) Askey-Wilson function
$\psi (q^x,q^{\hat{x}};q^{a_- +1},-q^{a_+ +1},q^{b_- +1/2},-q^{b_+ +1/2} \mid q)$
on the left-hand-sides of the stated formulas is then clear from the fact that
both solutions have the same plane-wave asymptotics of the form
$q^{x\hat{x}}$ for $x\to +\infty$. (Notice in this connection that for large $x$ the
difference equation degenerates to the trivial equation
$\psi (x+1)+\psi (x-1)=(q^{\hat{x}}+q^{-\hat{x}})\psi(x)$, the solutions of
which consist of a linear combination of the plane waves
$q^{x\hat{x}}$ and $q^{-x\hat{x}}$
characterized by possibly $x$-dependent $1$-periodic coefficients.)
For the Baker-Akhiezer function the
plane-wave asymptotics of the form $q^{x\hat{x}}$
is immediate from the explicit expressions, whereas
for the basic hypergeometric Askey-Wilson function
we read-off from the ${}_8W_7$-series representation \eqref{awf} that
\begin{equation}\label{plane-wave}
\psi (q^x,q^{\hat{x}};a,b,c,d \mid q)\stackrel{x\to +\infty}{\longrightarrow}
q^{x\hat{x}}
\frac{(\hat{a}q^{\hat{x}};q)_\infty}{(q^{1+2\hat{x}};q)_\infty}\;
{}_1\Phi_0\left[ \begin{array}{c} \hat{a}^{-1}q^{1+\hat{x}} \\ - \end{array}
\mid q;\hat{a}q^{\hat{x}} \right] = q^{x\hat{x}}
\end{equation}
(by the $q$-binomial formula \cite[Equation (II.3)]{gas-rah:basic}).

Now, to infer the claim that---with the above specialization of the
spectral data---the difference
equation for the Baker-Akhiezer function indeed coincides with the
Askey-Wilson eigenvalue equation, we will employ the factorization of the tau function
stated in Theorem \ref{awc:thm}:
\begin{align}
 \tau_{N} (x)
 &=
 \prod_{1\leq\ell\leq a_+} (1+q^{x+\ell})^{a_+ +1-\ell}(1+q^{x+1-\ell})^{a_+ +1-\ell}
 \nonumber \\
&\times
\prod_{1\leq\ell\leq b_+} (1+q^{x+1/2+\ell})^{b_+ -\ell}(1+q^{x+3/2-\ell})^{b_+ +1-\ell}
\nonumber \\
&\times
\prod_{1\leq\ell\leq a_-} (1-q^{x+\ell})^{a_- +1-\ell}(1-q^{x+1-\ell})^{a_- +1-\ell}
\nonumber\\
&\times
\prod_{1\leq\ell\leq b_-} (1-q^{x+1/2+\ell})^{b_- -\ell}(1-q^{x+3/2-\ell})^{b_- +1-\ell} .
\label{tauN}
\end{align}
(The proof of this factorization formula will be subsequently dealt with in
Section \ref{sec5} below.)
Clearly, the construction of the Baker-Akhiezer function $\psi_N(x,\hat{z})$ is invariant
with respect to permutations of the spectral data
$(\hat{z}_1,\hat{\nu}_1),\ldots ,(\hat{z}_N,\hat{\nu}_N)$,
and the specialization of parameters
in \eqref{parity-sym} moreover respects this symmetry.
We will now exploit this permutation symmetry to
fix $\hat{z}_N$ from the possible values in \eqref{AWdata} as
\begin{equation}\label{zn}
\hat{z}_N=q^{(n^+_+ +n^+_-)/2}=q^{(a_+ +b_+ +a_- +b_-)/2}.
\end{equation}
The passage from $\tau_N(x)$ to $\tau_{N-1}(x)$ then amounts to eliminating
the value $n^+_+ +n^+_-$ from $I^+$, or equivalently, to a shift of parameters of the
form
$(n^+_+,n^+_-;n^-_+,n^-_-)\longrightarrow
(n^+_+ -1+\delta_{n^+_+,0},n^+_--1+\delta_{n^+_-,0};n^-_+,n^-_-)$
(where $\delta_{j,k}$ refers to the Kronecker delta symbol), which gives rise to
a shift of the parameters of the form $(a_+,b_+;a_-,b_-)\longrightarrow
(b_+ -1+\delta_{b_+,0},a_+;b_- -1+\delta_{b_-,0}, a_-)$ in Theorem \ref{awc:thm}.
The upshot is that for the choice $\hat{z}_N$ \eqref{zn} the tau function
$\tau_{N-1}(x)$
factorizes as
\begin{align}
 \tau_{N-1} (x)
 &=
 \prod_{1\leq\ell\leq b_+ -1} (1+q^{x+\ell})^{b_+ -\ell}(1+q^{x+1-\ell})^{b_+ -\ell}
 \nonumber \\
&\times
\prod_{1\leq\ell\leq a_+} (1+q^{x+1/2+\ell})^{a_+ -\ell}(1+q^{x+3/2-\ell})^{a_+ +1-\ell}
\nonumber \\
&\times
\prod_{1\leq\ell\leq b_- -1} (1-q^{x+\ell})^{b_- -\ell}(1-q^{x+1-\ell})^{b_- -\ell}
\nonumber\\
&\times
\prod_{1\leq\ell\leq a_-} (1-q^{x+1/2+\ell})^{a_- -\ell}(1-q^{x+3/2-\ell})^{a_- +1-\ell} .
\label{tauN-1}
\end{align}
We thus have that
\begin{subequations}
\begin{align}
\lefteqn{A_N(x)B_N(x+1) = \frac{\tau_N(x+1) \tau_N(x-1)}{\tau_N^2(x)} } & \nonumber \\
& =
\frac{(1+q^{x+ 1+a_+ })(1+q^{x  +1/2 + b_+}) (1-q^{x +1+ a_-})(1-q^{x +1/2 +b_-})}
     {(1-q^{2x+1})(1-q^{2x+2})} \nonumber  \\
&\quad \times
\frac{(1+q^{x-a_+})(1+q^{x+1/2-b_+}) (1-q^{x-a_-})(1-q^{x+ 1/2-b_-})}
     {(1-q^{2x})(1-q^{2x+1})}
\end{align}
and that
\begin{align}
A_N(x) + B_N(x) &=
q^{(a_+ +a_- +b_+ +b_-)/2} \times \\
& \frac{(1+q^{x-a_+})(1+q^{x+1/2-b_+}) (1-q^{x-a_-})(1-q^{x+ 1/2-b_-})}
     {(1-q^{2x})(1-q^{2x+1})} + \nonumber  \\
&
q^{-(a_+ +a_- +b_+ +b_-)/2} \times \nonumber \\
& \frac{(1+q^{x+ a_+})(1+q^{x-1/2+ b_+}) (1-q^{x+ a_-})(1-q^{x-1/2+ b_-})}
     {(1-q^{2x})(1-q^{2x-1})} .\nonumber
\end{align}
\end{subequations}
With the aid of the latter expressions, it is immediately seen that
$A_N(x)B_N(x+1)=A(q^x)A(q^{-x-1})$, and
it is also not difficult to check (e.g. by comparing the
residues at the poles and the asymptotics for $x\to+\infty$ on both sides) that
$A_N(x)+B_N(x)-\hat{z}_N-\hat{z}_N^{-1}=A(q^x)+A(q^{-x})-\hat{a}-\hat{a}^{-1}$
(where the parameters are assumed to be of the form in \eqref{int-par}, so
$\hat{a}=q^{1+(a_+ +b_+ +a_- +b_-)/2}=q\hat{z}_N$). We thus reproduce the coefficients
of the Askey-Wilson difference equation, whence the claim follows.

\section{Proof of the Factorization of the Askey-Wilson-Cauchy Determinant}\label{sec5}
In this section we prove the product formula for the Askey-Wilson-Cauchy determinant
$\tau(x|n^+_+,n^+_-;n^-_+,n^-_-):=\det[\mathbf{1}+\mathbf{B}(x)\mathbf{C}]$
given in Theorem \ref{awc:thm}. The argument is self-contained and elementary (albeit technical): apart from the
definition of $\mathbf{B}(x)$ and $\mathbf{C}$, no property of the Askey-Wilson
functions or of the Toda chain enters. Specifically, the proof implements the
\emph{`identification of factors' method} for determinant evaluations from \cite[Section 2.4]{kra:advanced}
by means of the following steps.
\begin{enumerate}
\item[(a)] The product formula of Theorem \ref{awc:thm} is reformulated in terms of
elementary building blocks (`tent polynomials') as Theorem \ref{tent:thm}.
\item[(b)] The expansion \eqref{AWtauexp} of the determinant is rewritten as a signed sum over
subsets in which all absolute values have been eliminated (Proposition \ref{signed:prp}).
\item[(c)] By means of an alternant identity (Lemma \ref{alt:lem}), the signed sum is identified
with the ratio $D(v)/\Delta(y)$ of the determinant of a matrix with polynomial entries in
$v=-q^{x+1/2}$ and a Vandermonde determinant fixing the normalization of the polynomial (Proposition \ref{alt:prp}). This exhibits the
Askey-Wilson-Cauchy determinant as a polynomial in $v$ with constant term $1$ and of known degree
(Proposition \ref{deg:prp}).
\item[(d)] \emph{Identification of factors}: at each candidate zero $v=\sigma p^m$ (with $p=q^{1/2}$,
$\sigma\in\{1,-1\}$ and $m\in\mathbb{Z}$), a reflection symmetry of the columns of the polynomial
matrix yields a lower bound for its corank (Lemma \ref{refl:lem}), and hence for the order of
vanishing of $D(v)$ (Lemma \ref{corank:lem} and Proposition \ref{order:prp}). This bound is computed in
closed form (Lemma \ref{S:lem} and Proposition \ref{W:prp}) and turns out to coincide with the
multiplicity of the corresponding factor on the right-hand side of Theorem \ref{tent:thm}
(Proposition \ref{zeros:prp}).
\item[(e)] \emph{Degree count and multiplicative constant}: the multiplicities of the identified factors add
up to the degree of the determinant (Lemma \ref{count:lem}), so the product of these factors divides the
determinant and has the same degree; a comparison of the constant terms then completes the proof
of Theorem \ref{tent:thm}.
\end{enumerate}
Throughout this section we employ the abbreviations
\begin{equation}\label{put}
p:=q^{1/2},\qquad u:=q^{x+1/2},\qquad v:=-u,
\end{equation}
so $0<p<1$ and $q^{j(x+1/2)}=u^j$. Here the variable $u$ is adapted to the product formula
in \eqref{tau-prod}, whereas $v$ is adapted to the alternant, as the sign configuration
$\epsilon^\pm_j$ turns the weights of the determinantal expansion into powers of $-u$ (cf. Proposition \ref{signed:prp} below).
For definiteness we take $x$ real, so $u>0$;
since both sides of the product formula are polynomials in $u$ (see below),
the formula then extends to all complex $x$.

\subsection{Tent polynomials}
For an integer $M\geq 0$ and an indeterminate $w$ we define the {\em tent polynomial}\footnote{The terminology
refers to the tent-shaped (symmetric, unimodal, and piecewise linear) profile of the multiplicities:
plotted against the exponent $m$, these increase with slope $\frac{1}{2}$ up to the apex and decrease
again, the base of the tent being of width $2M$.}
\begin{subequations}
\begin{equation}\label{tent}
\begin{aligned}
T(M;w)&:=\prod_{m\in\mathbb{Z}}\bigl(1+w\,p^{m}\bigr)^{\mathrm{t}_M(m)},
\\
\mathrm{t}_M(m)&:=\begin{cases}\max\bigl(0,\tfrac{M-|m|}{2}\bigr)&\text{if}\ m\equiv M \ (\mathrm{mod}\ 2),\\[1ex]
0&\text{if}\ m\not\equiv M \ (\mathrm{mod}\ 2) . \end{cases}
\end{aligned}
\end{equation}
In other words, the exponents $m$ occurring in $T(M;w)$ have the
parity of $M$ and satisfy $|m|<M$; the multiplicity $\mathrm{t}_M(m)$ is
$\lfloor M/2\rfloor$ at $m=0$ (if $M$ is even) or at $m=\pm1$ (if $M$ is odd), and it decreases by one at each step of size $2$ in $|m|$.
In particular $T(0;w)=T(1;w)=1$, $T(2;w)=1+w$, and $\mathrm{t}_M(-m)=\mathrm{t}_M(m)$.
Since $q^{x+\ell}=up^{2\ell-1}$, $q^{x+1-\ell}=up^{1-2\ell}$,
$q^{x+1/2+\ell}=up^{2\ell}$ and $q^{x+3/2-\ell}=up^{2-2\ell}$, a
comparison of the exponents and their multiplicities reveals that,
for $\sigma\in\{1,-1\}$, the four products on the right-hand side of \eqref{tau-prod} are given by
\begin{align}
\prod_{1\leq\ell\leq a} (1+\sigma q^{x+\ell})^{a +1-\ell}(1+\sigma q^{x+1-\ell})^{a +1-\ell}&=T(2a+1;\sigma u),\nonumber
\\
\prod_{1\leq\ell\leq b} (1+\sigma q^{x+1/2+\ell})^{b -\ell}(1+\sigma q^{x+3/2-\ell})^{b +1-\ell}&=T(2b;\sigma u). \label{tent-prod}
\end{align}
\end{subequations}
Moreover, for the parameters $a_\pm,b_\pm$ from \eqref{par-rel} one has that
$\{2a_\pm+1,2b_\pm\}=\{n^+_\pm+n^-_\pm+1,|n^+_\pm-n^-_\pm|\}$ (as unordered pairs), in both parity cases.
Hence Theorem \ref{awc:thm} is equivalent to the following statement.

\begin{theorem}[Tent Formula]\label{tent:thm}
For all nonnegative integers $n^+_+,n^+_-,n^-_+,n^-_-$,
\begin{align}
\lefteqn{\tau(x|n^+_+,n^+_-;n^-_+,n^-_-)=} & \label{tent-thm}
\\
&T(n^+_++n^-_++1;u)\,T(|n^+_+-n^-_+|;u)\,
T(n^+_-+n^-_-+1;-u)\,T(|n^+_--n^-_-|;-u).\nonumber
\end{align}
\end{theorem}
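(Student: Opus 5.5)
The proof follows the roadmap (b)--(e) announced at the start of this section: the identification of factors method for the determinant $\tau(x|n^+_+,n^+_-;n^-_+,n^-_-)=\det[\mathbf{1}+\mathbf{B}(x)\mathbf{C}]$, regarded as a polynomial in $u$.

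The first step removes the absolute values from the principal-minor expansion \eqref{AWtauexp}. Since the nodes $q^{j/2}$, $j\in I^\pm$, are ordered, each factor $|(1-q^{(j+k)/2})/(q^{j/2}-q^{k/2})|$ equals the signed quotient times $\mathrm{sgn}(k-j)$. I expect the sign configurations $\epsilon^\pm_j$ (through the position counters $\eta^\pm_j$) to be built precisely so that these signs, collected over $J\times J^c$, combine with $\epsilon^\pm_j$ into a factor $(-1)^j$ per element of $J$. The weights then become $v^j$ with $v=-u$, and every term of the sum is a rational function without absolute values.

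The second step recognises the resulting sum as a ratio of alternants. The signed sum over $J^+\subset I^+$, $J^-\subset I^-$ should be a Laplace-type expansion of $D(v)/\Delta(y)$. Here $\Delta(y)$ is a Vandermonde determinant in the nodes $y=\{p^{j}\}_{j\in I^+}\cup\{-p^{j}\}_{j\in I^-}$ together with their inverses. $D(v)$ is the determinant of a matrix whose columns are indexed by the same nodes, and whose column for $y$ is a combination of the form $(y^{i}+v^{\,\cdot}y^{-i})$, chosen so that choosing a term from each column reproduces the Cauchy-type factors $(1-y_jy_k)/(y_j-y_k)$. This gives the following structural facts.
\begin{itemize}
\item $\tau$ is a polynomial in $v$.
\item Its constant term is $1$, coming from $J=\emptyset$.
\item Its degree is $\sum_{j\in I^+}j+\sum_{j\in I^-}j$, which should be checked to equal the total degree of the right-hand side of \eqref{tent-thm}, namely $\sum_M\sum_m \mathrm{t}_M(m)$ over the four tents.
\end{itemize}
This degree check is a routine count (Lemma~\ref{count:lem}).

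The core of the argument, and the step I expect to be the main obstacle, is the identification of zeros. At $v=\sigma p^m$, substituting into the column for a node $y=\pm p^{j}$ relates it to the column for the reflected node $\pm p^{m-j}$, up to sign and scalar. Whenever both $j$ and the reflected index lie in the same index set $I^\pm$, this produces a linear relation among columns. More generally, if the reflected index is absent, a relation arises with a column that vanishes after suitable row operations.

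The number of independent relations gives a lower bound on the corank of the matrix at $v=\sigma p^m$. The corank in turn bounds the order of vanishing of $D(v)$ there. To obtain the full multiplicity rather than just vanishing, I would use the standard argument that $k$ independent relations which persist under differentiation give a zero of order $k$; concretely, one perturbs $v$ and extracts $k$ columns divisible by $(v-\sigma p^m)$.

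Counting these reflection pairs is combinatorial. The set $I^\pm$ consists of an initial interval $\{1,\dots,|n_+-n_-|\}$ followed by an arithmetic progression of step $2$. Counting pairs $\{j,m-j\}\subset I^\pm$, with $\sigma$ matching the block, should reproduce $\mathrm{t}_{n^+_++n^-_++1}(m)+\mathrm{t}_{|n^+_+-n^-_+|}(m)$ for $\sigma=-1$ in $v$, and similarly for the other sign. Mixed blocks enter through the node signs.

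Finally, since the identified factors have total multiplicity equal to $\deg D$ and both sides have constant term $1$, they coincide. Any delicate parity cases, such as $m$ where the two tents overlap, would be handled by verifying that the zeros at $\pm p^m$ are counted without double counting.
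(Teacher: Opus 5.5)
\textbf{Verdict: genuine gap.} Your outer framework matches the paper's: remove the absolute values, pass to an alternant $D(v)/\Delta(y)$, read off constant term $1$ and degree $\sum_{j\in I^+}j+\sum_{j\in I^-}j$, bound vanishing orders by coranks, and close with a degree count. However, the identification of zeros, which is the step that carries the proof, rests on a mechanism that does not exist.

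At $v=\sigma p^m$ the vector attached to the node $sp^{j}$ has entries $s^c\bigl[p^{jc}-\kappa\,p^{j(m+N-1-c)}\bigr]$, $0\le c<N$, with $\kappa=(\theta_\delta\sigma)^{j}=\pm1$. As a function of $c$ this is a combination of $(sp^{j})^c$ and $(sp^{-j})^c$. Nothing relates it to the vector of the node $\pm p^{m-j}$. What it does have is a symmetry, up to sign, under $c\mapsto m+N-1-c$.

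Your pair count also gives wrong answers:
\begin{itemize}
\item For $n^+_+=2$ and the other parameters zero, $I^+=\{1,2\}$ and $\tau=(1+up^{-1})(1+u)(1+up)$. This vanishes at $v=p^{m}$ for $m\in\{-1,0,1\}$, where no pair $\{j,m-j\}\subset\{1,2\}$ exists. The only genuine pair $\{1,2\}$ sits at $m=3$, where $\tau\neq0$.
\item For $n^+_+=n^+_-=1$, $n^-_\pm=0$, there is a single node ($I^+=\{2\}$, $N=1$), yet $\tau=1-u^2$ vanishes at both $v=1$ and $v=-1$.
\end{itemize}
So the sign $\sigma$ of a zero is not tied to a block. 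The zeros with $\sigma=1$ are governed by $n^+_+$ and $n^-_+$, one parameter from each block, and a single block feeds zeros of both signs.

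Your first step is also slightly off. The weights become $(\theta_\delta v)^{j}$, with an extra $(-1)^{N-1}$ on $I^-$ and a global sign $(-1)^{\binom{|J|+1}{2}}$, not uniformly $v^{j}$. The factor $\theta_\delta$ matters whenever $n^\delta_+<n^\delta_-$.

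The missing idea is a reflection of the monomial index that acts on all rows at once. Put $\ell=m+N-1$ and $c'=\ell-c$ on $J=\{c:0\le c,c'\le N-1\}$, so $|J|=h=N-|m|$. Then $A_{i,c'}(v_0)=\lambda_iA_{i,c}(v_0)$ with $\lambda_i=-(\theta_\delta\sigma)^{j_i}s_\delta^{\,\ell}$. Symmetric coefficient vectors supported on $J$ are automatically annihilated by the rows with $\lambda_i=-1$, and antisymmetric ones by the rows with $\lambda_i=+1$. Hence the corank is at least $\max(\lceil h/2\rceil-R_+,\lfloor h/2\rfloor-R_-)$, where $R_\pm=\#\{i:\lambda_i=\pm1\}$. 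It is not a count of pairs.

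The multiplicity then comes from the row-sign sum $\Lambda=-S_+-(-1)^{\ell}S_-$, with $S_\delta=\sum_{j\in I^\delta}(\theta_\delta\sigma)^j$. This sum equals $k_\delta$ or $-1-k_\delta$, where $k_\delta=n^\delta_+$ for $\sigma=1$ and $k_\delta=n^\delta_-$ for $\sigma=-1$. The bound becomes $\tfrac12(M-|m|)$ with $M=|F-\Lambda|$ and $F=h\bmod 2$. Here $M\in\{k_++k_-+1,|k_+-k_-|\}$ is the one of the two with the parity of $m$. This is exactly the tent multiplicity, and since the two tent sizes have opposite parities there is no double counting to handle.

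Without this computation, or an equivalent one, your proposal gives no valid lower bound on the vanishing orders, so the degree count has nothing to close on.
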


Notice the pairing of the parameters in \eqref{tent-thm}:
the tent polynomials in $u$ involve $n^+_+$ and $n^-_+$, whereas those in $-u$ involve
$n^+_-$ and $n^-_-$. The remainder of this section is devoted to the proof of Theorem \ref{tent:thm}.

\subsection{Combinatorics of the polynomial expansion for $\tau$}\label{sec5:not}
For $\delta\in\{+,-\}$, let us abbreviate
\begin{equation*}
d_\delta:=|n^\delta_+-n^\delta_-|,\quad \mu_\delta:=\min(n^\delta_+,n^\delta_-),\quad N_\delta:=|I^\delta| =\max (n^\delta_+,n^\delta_-),
\end{equation*}
and put
\begin{equation}\label{eps-eta}
N:=N_++N_-,\quad \epsilon^\delta_j=(-1)^{\eta^\delta_j+\frac{1}{2}(1+\theta_\delta)j}\quad\text{with}\
 \theta_\delta:= \begin{cases} 1&\text{if}\  n^\delta_+\geq n^\delta_-, \\ -1&\text{if}\  n^\delta_+<n^\delta_- ,\end{cases}
\end{equation}
where (recall) $I^\delta=\{1,\ldots ,d_\delta\}\cup\{d_\delta+2,d_\delta+4,\ldots ,n^\delta_++n^\delta_-\}$ and $\eta^\delta_j$ denotes
the position of $j$ in $I^\delta$.

The expansion of the determinant \eqref{AWtauexp} will now be rewritten in the polynomial form (cf. \eqref{bafs2})
\begin{equation}\label{sum0}
\tau=\sum_{\mathcal{J}\subseteq \mathcal{I}}\ \prod_{i\in \mathcal{J}}\epsilon_i\,u^{j_i}\prod_{i\in \mathcal{J},\,i'\not\in \mathcal{J}}|R_{ii'}| ,
\end{equation}
where we have suppressed the dependence on $x$ and on $n^\pm_\pm$.
Here the sum runs over the subsets of the disjoint union $\mathcal{I}:=I^+\sqcup I^-$, which labels the rows and columns of
$\mathbf{B}(x)$ and $\mathbf{C}$. An element $i\in \mathcal{I}$ carries a sign $\delta(i)\in\{+,-\}$ and a label $j_i\in I^{\delta(i)}$, and we write
$\epsilon_i:=\epsilon^{\delta(i)}_{j_i}$ and $\eta_i:=\eta^{\delta(i)}_{j_i}$. Furthermore, using $s_+:=1$ and $s_-:=-1$:
\begin{equation}\label{yR}
 R_{ii'}:=\frac{1-y_iy_{i'}}{y_i-y_{i'}}\quad (i\neq i')\qquad \text{with}\quad  y_i:=s_{\delta(i)}\,p^{\,j_i} .
\end{equation}
Clearly the numbers $y_i$, $i\in \mathcal{I}$, are nonzero and pairwise distinct, and $R_{i'i}=-R_{ii'}$.
Notice that in this combinatorial reformulation $\mathbf{C}=[(1-y_iy_{i'})^{-1}]_{i,i'\in \mathcal{I}}$, and factors of the form
$|(1-q^{(j_i+j_{i'})/2})/(q^{j_i/2}-q^{j_{i'}/2})|$ and $(1+q^{(j_i+j_{i'})/2})/(q^{j_i/2}+q^{j_{i'}/2})$
in $\mathbf{B}(x)$ are equal to $|R_{ii'}|$ (for $i,i'$ of equal and of opposite sign, respectively), which
confirms \eqref{sum0}.

\subsection{Removing the absolute values}
\begin{lemma}[Sign Bookkeeping]\label{signs:lem}
Let $i, i'$ be two distinct elements of $\mathcal{I}$ and let $\delta\in \{ +,-\}$.

 (a) If $\delta(i)=\delta(i')=\delta$ and $j_i<j_{i'}$, then $R_{ii'}$ has the sign $s_\delta$.
 
(b) If $\delta(i)=\delta$ and $\delta(i')\neq \delta$, then $R_{ii'}$ has the sign $s_\delta$.
\end{lemma}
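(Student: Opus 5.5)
Both parts reduce to a direct sign computation from the definition $R_{ii'}=(1-y_iy_{i'})/(y_i-y_{i'})$ with $y_i=s_{\delta(i)}p^{j_i}$, $0<p<1$ and $j_i\geq 1$. The plan is to determine the sign of the numerator and the denominator separately in each case.

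For (a), let $\delta(i)=\delta(i')=\delta$. Then $y_iy_{i'}=s_\delta^2p^{j_i+j_{i'}}=p^{j_i+j_{i'}}\in(0,1)$, so the numerator $1-y_iy_{i'}$ is positive. The denominator is $y_i-y_{i'}=s_\delta(p^{j_i}-p^{j_{i'}})$. Since $j_i<j_{i'}$ and $0<p<1$, we have $p^{j_i}>p^{j_{i'}}$, so the denominator has sign $s_\delta$. Hence $R_{ii'}$ has sign $s_\delta$.

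For (b), let $\delta(i)=\delta$ and $\delta(i')=-\delta$. Then $y_iy_{i'}=-p^{j_i+j_{i'}}$, so the numerator $1+p^{j_i+j_{i'}}$ is positive. The denominator is $y_i-y_{i'}=s_\delta p^{j_i}+s_\delta p^{j_{i'}}=s_\delta(p^{j_i}+p^{j_{i'}})$, because $s_{-\delta}=-s_\delta$. This has sign $s_\delta$, and therefore so does $R_{ii'}$. Note that in (b) no ordering of $j_i$ and $j_{i'}$ is needed, and coincident labels $j_i=j_{i'}$ are allowed, since $i\neq i'$ with opposite signs still gives $y_i\neq y_{i'}$.

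There is no real obstacle. The only points needing care are keeping the orientation $j_i<j_{i'}$ straight in (a), since $p<1$ reverses the order of the powers, and recording that in (b) the sign depends only on $\delta(i)$. That asymmetry is consistent with $R_{i'i}=-R_{ii'}$, because swapping $i$ and $i'$ replaces $s_\delta$ by $s_{-\delta}=-s_\delta$.
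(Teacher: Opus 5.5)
Your proposal is correct and is essentially the paper's argument: a direct sign check of the numerator and denominator of $R_{ii'}$, using $0<p<1$ and $j_i,j_{i'}\geq 1$. The only difference is bookkeeping. The paper computes the case $\delta=+$ and then obtains $\delta=-$ in (a) via the global flip $(y_i,y_{i'})\mapsto(-y_i,-y_{i'})$, and in (b) via the antisymmetry $R_{i'i}=-R_{ii'}$. You instead factor $s_\delta$ out of the denominator and treat both signs at once.
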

\begin{proof}
 For $i,i'\in I^+$ with $j_i<j_{i'}$ one has $R_{ii'}=(1-p^{j_i+j_{i'}})/(p^{j_i}-p^{j_{i'}})>0$ (as $0<p<1$), while replacing $(y_i,y_{i'})$ by $(-y_i,-y_{i'})$ flips the sign of $R_{ii'}$, so (a) follows.
For $i\in I^+$ and $i'\in I^-$ one has $R_{ii'}=(1+p^{j_i +j_{i'}})/(p^{j_i}+p^{{j_{i'}}})>0$, so (b) follows by the antisymmetry $R_{i'i}=-R_{ii'}$.
\end{proof}

\begin{proposition}[Sign Pattern]\label{signed:prp}
One has
\begin{subequations}
\begin{equation}\label{sum1}
\tau=\sum_{\mathcal{J}\subseteq \mathcal{I}}(-1)^{\binom{|\mathcal{J}|+1}{2}}\prod_{i\in \mathcal{J}}\alpha_i\prod_{i\in \mathcal{J},\,i'\not\in \mathcal{J}}R_{ii'} ,
\end{equation}
with
\begin{equation}\label{alpha}
\alpha_i:=\begin{cases}(\theta_+v)^{j_i}&\text{if}\ i\in I^+,\\[.5ex] (-1)^{N-1}(\theta_-v)^{j_i}&\text{if}\ i\in I^- .\end{cases}
\end{equation}
\end{subequations}
\end{proposition}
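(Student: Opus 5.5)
The identity \eqref{sum1} will be proved term by term: for each fixed subset $\mathcal{J}\subseteq\mathcal{I}$, the summand of \eqref{sum0} should equal the corresponding summand of \eqref{sum1}. Since $|R_{ii'}|=\pm R_{ii'}$ with signs determined by Lemma \ref{signs:lem}, the whole task reduces to a parity count.

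\emph{Step 1: the weights.} Write $k_\pm:=|\mathcal{J}\cap I^\pm|$. With $u=-v$, each weight becomes
\[
\epsilon_i u^{j_i}=(-1)^{\eta_i+\frac12(1+\theta_\delta)j_i+j_i}v^{j_i}.
\]
Since $\frac12(1+\theta_\delta)+1-\frac12(1-\theta_\delta)=\theta_\delta+1$ is even, this simplifies to
\[
\epsilon_i u^{j_i}=(-1)^{\eta_i}(\theta_\delta v)^{j_i}.
\]
So \eqref{sum0} already carries the factors $(\theta_\delta v)^{j_i}$ appearing in $\alpha_i$, together with an extra sign $(-1)^{\sum_{i\in\mathcal{J}}\eta_i}$.

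\emph{Step 2: counting negative factors.} Next I count the negative factors among the $R_{ii'}$ with $i\in\mathcal{J}$ and $i'\notin\mathcal{J}$, using Lemma \ref{signs:lem} together with the antisymmetry $R_{i'i}=-R_{ii'}$.
\begin{itemize}
\item For $i\in\mathcal{J}\cap I^+$, $R_{ii'}$ is negative exactly when $i'\in I^+\setminus\mathcal{J}$ and $j_{i'}<j_i$. The number of such $i'$ is $\eta_i-1$ minus the number of elements of $\mathcal{J}\cap I^+$ below $i$.
\item For $i\in\mathcal{J}\cap I^-$, $R_{ii'}$ is negative in two situations. Either $i'\in I^-\setminus\mathcal{J}$ with $j_{i'}>j_i$, giving $N_--\eta_i$ minus the number of elements of $\mathcal{J}\cap I^-$ above $i$. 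Or $i'\in I^+\setminus\mathcal{J}$, giving $N_+-k_+$ factors.
\end{itemize}
Summing, the total number of sign flips is
\[
S=\sum_{i\in\mathcal{J}\cap I^+}(\eta_i-1)-\binom{k_+}{2}+\sum_{i\in\mathcal{J}\cap I^-}(N_--\eta_i)-\binom{k_-}{2}+k_-(N_+-k_+).
\]

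\emph{Step 3: the parity check.} Adding the $\sum\eta_i$ from Step 1 to $S$, the position labels $\eta_i$ cancel mod $2$. This leaves the total sign exponent
\[
k_+-\binom{k_+}{2}-\binom{k_-}{2}+k_-N-k_+k_- .
\]
The exponent demanded by \eqref{sum1}, namely $\binom{|\mathcal{J}|+1}{2}$ together with the factor $(-1)^{N-1}$ for each $i\in I^-$, is
\[
\binom{k_++k_-+1}{2}+k_-(N-1)=\binom{k_+}{2}+\binom{k_-}{2}+k_+k_-+k_++k_-+k_-N-k_- .
\]
The two exponents differ by $2\binom{k_+}{2}+2\binom{k_-}{2}+2k_+k_-$, which is even. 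Hence the summands agree for every $\mathcal{J}$.

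The only delicate point is the bookkeeping in Step 2: correctly converting "number of non-$\mathcal{J}$ elements below or above $i$" into positions $\eta_i$ minus the binomial correction. Beyond that the proof is routine parity arithmetic.
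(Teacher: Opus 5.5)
Your proposal is correct and follows the paper's proof. Like the paper, you compare summands term by term, use Lemma \ref{signs:lem} to count the negative factors $R_{ii'}$, note that the $\eta_i$ cancel modulo $2$, and apply the identity $\epsilon_i(-1)^{\eta_i}u^{j_i}=(\theta_{\delta(i)}v)^{j_i}$; your counts agree with the paper's, e.g.\ $\sum_{i\in\mathcal{J}\cap I^+}(\eta_i-1)-\binom{k_+}{2}=\sum_{i\in\mathcal{J}\cap I^+}\eta_i-\binom{k_++1}{2}$, and the final parity comparison checks out.
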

\begin{proof}
Fix $\mathcal{J}\subseteq \mathcal{I}$ and put $\mathcal{J}_\delta:=\mathcal{J}\cap I^\delta$ and $r_\delta:=|\mathcal{J}_\delta|$ for $\delta\in\{+,-\}$. For $i\in \mathcal{J}_\delta$, i.e. $\delta=\delta(i)$, let $\eta_i(\mathcal{J})\in\{1,\ldots ,r_\delta\}$ denote the position of $i$ in $\mathcal{J}_\delta$ (with respect to increasing labels). Since $\eta_i$ is the position of $j_i$ in $I^\delta$, we have
\begin{subequations}
\begin{align}
\label{count-lt} \#\{i'\in I^\delta\setminus \mathcal{J}_\delta:\ j_{i'}<j_i\}&=\eta_i-\eta_i(\mathcal{J}),\\
\label{count-gt} \#\{i'\in I^\delta\setminus \mathcal{J}_\delta:\ j_{i'}>j_i\}&=(N_\delta-\eta_i)-(r_\delta-\eta_i(\mathcal{J})).
\end{align}
\end{subequations}
Notice that, as $i$ runs through $\mathcal{J}_\delta$, the position $\eta_i(\mathcal{J})$
runs through $1,\ldots ,r_\delta$, whence $\sum_{i\in \mathcal{J}_\delta}\eta_i(\mathcal{J})=\binom{r_\delta+1}{2}$.
We are now in a position to count the negative factors among the $R_{ii'}$ with $i\in \mathcal{J}$, $i'\not\in \mathcal{J}$ by means of Lemma \ref{signs:lem}:
\begin{itemize}
\item[$-$] for $i\in \mathcal{J}_+$, $i'\in I^+\setminus \mathcal{J}_+$ the factor is negative iff $j_{i'}<j_i$, so \eqref{count-lt} yields
$\sum_{i\in \mathcal{J}_+}\eta_i-\binom{r_++1}{2}$ negative factors;
\item[$-$] for $i\in \mathcal{J}_-$, $i'\in I^-\setminus \mathcal{J}_-$ the factor is negative iff $j_{i'}>j_i$, so \eqref{count-gt} yields
$r_-(N_--r_-)-\sum_{i\in \mathcal{J}_-}\eta_i+\binom{r_-+1}{2}$ negative factors;
\item[$-$] for $i\in \mathcal{J}_+$, $i'\in I^-\setminus \mathcal{J}_-$ there are no negative factors;
\item[$-$] for $i\in \mathcal{J}_-$, $i'\in I^+\setminus \mathcal{J}_+$ all $r_-(N_+-r_+)$ factors are negative.
\end{itemize}
By summing all contributions, it is seen that
modulo $2$ the total number of negative factors is congruent to
\begin{equation*}
\sum_{i\in \mathcal{J}}\eta_i+\binom{r_++1}{2}+\binom{r_-+1}{2}+r_+r_-+r_-N+r_-^2\equiv
\sum_{i\in \mathcal{J}}\eta_i+\binom{|\mathcal{J}|+1}{2}+r_-(N-1),
\end{equation*}
where we used that $\binom{r_++1}{2}+\binom{r_-+1}{2}+r_+r_-=\binom{r_++r_-+1}{2}$ and $r_-^2\equiv r_-\pmod 2$ (as $r_-(r_--1)$ is even). The upshot is that
\begin{equation*}
\prod_{i\in \mathcal{J}}\epsilon_iu^{j_i}\prod_{i\in \mathcal{J},\,i'\not\in \mathcal{J}}|R_{ii'}|=
(-1)^{\binom{|\mathcal{J}|+1}{2}}\prod_{i\in \mathcal{J}}\epsilon_i(-1)^{\eta_i}u^{j_i}\prod_{i\in \mathcal{J}_-}(-1)^{N-1}\prod_{i\in \mathcal{J},\,i'\not\in \mathcal{J}}R_{ii'},
\end{equation*}
which yields \eqref{sum1}, \eqref{alpha} upon observing that, by \eqref{eps-eta},
$\epsilon_i(-1)^{\eta_i}u^{j_i}=(-u)^{j_i}=v^{j_i}$ if $\theta_{\delta(i)}=1$ and
$\epsilon_i(-1)^{\eta_i}u^{j_i}=u^{j_i}=(-v)^{j_i}$ if $\theta_{\delta(i)}=-1$, i.e. $\epsilon_i(-1)^{\eta_i}u^{j_i}=(\theta_{\delta(i)}v)^{j_i}$ in both cases.
\end{proof}

\subsection{Alternant representation for $\tau$}\label{sec5:alt}
So as to write down determinants with rows labeled by $\mathcal{I}$,
we fix an arbitrary total ordering of $\mathcal{I}$. Since such determinants change by a common sign upon reordering, their ratios do not depend on the choice of ordering.

\begin{lemma}[Alternant]\label{alt:lem}
For arbitrary parameters $\alpha_1,\ldots ,\alpha_N$ and distinct nonzero (possibly complex) variables $z_1,\ldots ,z_N$, one has
\begin{equation}\label{alt}
\det\bigl[z_r^c-\alpha_rz_r^{N-1-c}\bigr]_{\begin{subarray}{c}1\leq r\leq N\\ 0\leq c< N\end{subarray}}
=\Delta(z)\sum_{\mathcal{J}\subseteq\{1,\ldots ,N\}}(-1)^{\binom{|\mathcal{J}|+1}{2}}\prod_{r\in \mathcal{J}}\alpha_r\prod_{\begin{subarray}{c}r\in \mathcal{J}\\ r'\not\in \mathcal{J}\end{subarray}}\frac{1-z_rz_{r'}}{z_r-z_{r'}} ,
\end{equation}
where
$\Delta(z):=\det[z_r^c]_{1\leq r\leq N,\,0\leq c< N}=\prod_{1\leq r<r'\leq N}(z_{r'}-z_r)$ denotes the Vandermonde determinant.
\end{lemma}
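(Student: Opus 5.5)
The plan is to expand the determinant on the left of \eqref{alt} by multilinearity in the rows and then to evaluate each resulting term as a Vandermonde determinant in modified variables. Row $r$ of the matrix is the difference of the vectors $(z_r^c)_{0\leq c<N}$ and $\alpha_r(z_r^{N-1-c})_{0\leq c<N}$. Expanding row by row gives
\begin{equation*}
\det\bigl[z_r^c-\alpha_rz_r^{N-1-c}\bigr]=\sum_{\mathcal{J}\subseteq\{1,\ldots ,N\}}(-1)^{|\mathcal{J}|}\prod_{r\in\mathcal{J}}\alpha_r\,\det M_{\mathcal{J}} .
\end{equation*}
Here $M_{\mathcal{J}}$ has row $z_r^{N-1-c}$ for $r\in\mathcal{J}$ and row $z_r^c$ for $r\not\in\mathcal{J}$.

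Next we write $z_r^{N-1-c}=z_r^{N-1}(z_r^{-1})^c$ and pull the factor $z_r^{N-1}$ out of each row with $r\in\mathcal{J}$. This gives
\begin{equation*}
\det M_{\mathcal{J}}=\prod_{r\in\mathcal{J}}z_r^{N-1}\,\Delta(w),
\end{equation*}
where $w_r=z_r^{-1}$ for $r\in\mathcal{J}$ and $w_r=z_r$ otherwise. It then remains to compute the ratio $\Delta(w)/\Delta(z)$ one pair $r<r'$ at a time, and this is the only step requiring care with signs. There are three cases.
\begin{itemize}
\item If both $r,r'\not\in\mathcal{J}$, the pair contributes the factor $1$.
\item If both $r,r'\in\mathcal{J}$, then $(z_{r'}^{-1}-z_r^{-1})/(z_{r'}-z_r)=-1/(z_rz_{r'})$.
\item If exactly one index lies in $\mathcal{J}$, call it $s$ and call the other one $s'$. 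A direct check in both orientations ($s<s'$ and $s>s'$) shows that the factor equals $z_s^{-1}(1-z_sz_{s'})/(z_s-z_{s'})$. This is precisely the factor appearing on the right of \eqref{alt}, multiplied by $z_s^{-1}$.
\end{itemize}

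Finally, we collect the powers and the signs. For each $r\in\mathcal{J}$, the inverse powers of $z_r$ arising from the pairs amount to $(|\mathcal{J}|-1)+(N-|\mathcal{J}|)=N-1$, which exactly cancels the prefactor $z_r^{N-1}$. The sign is $(-1)^{|\mathcal{J}|}$ from the expansion times $(-1)^{\binom{|\mathcal{J}|}{2}}$ from the pairs inside $\mathcal{J}$, and the product equals $(-1)^{\binom{|\mathcal{J}|+1}{2}}$. Summing over $\mathcal{J}$ then yields \eqref{alt}. The only potential obstacle is the orientation check in the mixed case, which is routine. The assumption that the $z_r$ are distinct and nonzero guarantees that all the divisions used above are legitimate.
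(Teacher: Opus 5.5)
Your proposal is correct and follows essentially the same route as the paper: multilinear expansion in the rows, writing each term as $\prod_{r\in\mathcal{J}}z_r^{N-1}$ times a Vandermonde determinant in the partially inverted variables, and comparing it with $\Delta(z)$ pair by pair. Your pair factors, your count of $N-1$ inverse powers for each $r\in\mathcal{J}$, and the sign identity $(-1)^{|\mathcal{J}|}(-1)^{\binom{|\mathcal{J}|}{2}}=(-1)^{\binom{|\mathcal{J}|+1}{2}}$ all match the paper's computation.
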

\begin{proof}
Row $r$ of the matrix on the left is of the form $\rho(z_r)-\alpha_rz_r^{N-1}\rho(z_r^{-1})$ with $\rho(w):=(w^c)_{0\leq c< N}$. Our determinant thus expands by multilinearity in the rows
in terms of Vandermonde determinants as follows:
\begin{subequations}
\begin{equation}\label{A-exp}
\det\bigl[z_r^c-\alpha_rz_r^{N-1-c}\bigr]_{\begin{subarray}{c}1\leq r\leq N\\ 0\leq c< N\end{subarray}}=
\sum_{\mathcal{J}\subseteq\{1,\ldots ,N\}}\prod_{r\in \mathcal{J}}(-\alpha_rz_r^{N-1})\,\Delta(z^{\mathcal{J}}),
\end{equation}
where $z^{\mathcal{J}}$ denotes $z$ with $z_r$ replaced by $z_r^{-1}$ for $r\in \mathcal{J}$. Upon
comparing $\Delta(z^{\mathcal{J}})=\prod_{r<r'}(z^{\mathcal{J}}_{r'}-z^{\mathcal{J}}_r)$ with $\Delta(z)$ factor by factor, one finds:
\begin{equation}\label{A-term}
\Delta(z^{\mathcal{J}})=\Delta(z)(-1)^{\binom{|\mathcal{J}|}{2}}\prod_{r\in \mathcal{J}}z_r^{-(N-1)}\prod_{r\in \mathcal{J},\,r'\not\in \mathcal{J}}\frac{1-z_rz_{r'}}{z_r-z_{r'}} .
\end{equation}
\end{subequations}
Indeed, for $r<r'$ both in $\mathcal{J}$, $z_{r'}^{-1}-z_r^{-1}=-(z_rz_{r'})^{-1}(z_{r'}-z_r)$; for a pair $\{ r,r'\}$ with $r\in \mathcal{J}$, $r'\not\in \mathcal{J}$ (in either order),
$z_{r'}-z_r^{-1}=\frac{1-z_rz_{r'}}{z_r(z_r-z_{r'})}(z_{r'}-z_r)$; and pairs outside $\mathcal{J}$ are unchanged.
Moreover, a given $r\in \mathcal{J}$ lies in $|\mathcal{J}|-1$ pairs of the first kind and in $N-|\mathcal{J}|$ pairs of the second kind, so \eqref{A-term} follows.
Finally, upon combining \eqref{A-exp} and \eqref{A-term} one arrives at the asserted expansion
by virtue of the identity $(-1)^{|\mathcal{J}|}(-1)^{\binom{|\mathcal{J}|}{2}}=(-1)^{\binom{|\mathcal{J}|+1}{2}}$.
\end{proof}

\begin{proposition}[Alternant Representation]\label{alt:prp}
Let $\mathbf{A}(v):=[A_{i,c}(v)]_{i\in \mathcal{I},\,0\leq c< N}$ be the $N\times N$ matrix with entries
\begin{subequations}
\begin{equation}\label{rows}
A_{i,c}(v)=s_\delta^{\,c}\Bigl[p^{jc}-(\theta_\delta v)^{j}p^{j(N-1-c)}\Bigr],\qquad \delta=\delta(i),\ j=j_i .
\end{equation}
Then
\begin{equation}
\tau=\frac{D(v)}{\Delta(y)} ,
\end{equation}
where $D(v):=\det\mathbf{A}(v)$ and $\Delta(y):=\det[y_i^c]_{i\in \mathcal{I},\,0\leq c< N}$ ($\neq 0$).
\end{subequations}
\end{proposition}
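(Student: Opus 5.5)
The plan is to obtain the statement as a direct specialization of the Alternant Lemma~\ref{alt:lem}, applied to the signed expansion \eqref{sum1} of Proposition~\ref{signed:prp}.

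\emph{Choice of variables.} Label the $N=N_++N_-$ rows by $\mathcal{I}$ in the fixed total ordering, identifying $\mathcal{I}$ with $\{1,\ldots,N\}$. In Lemma~\ref{alt:lem} take $z_i:=y_i=s_{\delta(i)}p^{j_i}$ from \eqref{yR}, and take $\alpha_i$ as in \eqref{alpha}. The $y_i$ are nonzero and pairwise distinct (noted after \eqref{yR}), so the lemma applies and $\Delta(y)\neq 0$. With these choices the factor $(1-z_rz_{r'})/(z_r-z_{r'})$ in \eqref{alt} is exactly $R_{ii'}$. Hence the right-hand side of \eqref{alt} equals $\Delta(y)$ times the signed sum \eqref{sum1}, which is $\tau$ by Proposition~\ref{signed:prp}. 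This gives
\[
\tau=\frac{\det\bigl[y_i^c-\alpha_iy_i^{N-1-c}\bigr]}{\Delta(y)} .
\]

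\emph{Matching the entries.} It remains to check entrywise that $y_i^c-\alpha_iy_i^{N-1-c}=A_{i,c}(v)$ as given in \eqref{rows}.
\begin{itemize}
\item For $i\in I^+$ one has $s_+=1$ and $\alpha_i=(\theta_+v)^{j}$. The entry is $p^{jc}-(\theta_+v)^jp^{j(N-1-c)}$, which is \eqref{rows} with $s_+^c=1$.
\item For $i\in I^-$ one has $y_i^c=(-1)^cp^{jc}$, and
\[
\alpha_iy_i^{N-1-c}=(-1)^{N-1}(\theta_-v)^j(-1)^{N-1-c}p^{j(N-1-c)}=(-1)^{c}(\theta_-v)^jp^{j(N-1-c)},
\]
using $(-1)^{-c}=(-1)^c$. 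Factoring out $(-1)^c=s_-^{\,c}$ gives exactly \eqref{rows}.
\end{itemize}
So the matrix of Lemma~\ref{alt:lem} coincides with $\mathbf{A}(v)$, and $\tau=D(v)/\Delta(y)$.

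\emph{Remaining points.} Both $D(v)$ and $\Delta(y)$ are computed with the same row ordering, so the ratio does not depend on that ordering, as remarked at the start of Subsection~\ref{sec5:alt}. The only step needing care is the sign bookkeeping for the $I^-$ rows: the factor $(-1)^{N-1}$ in $\alpha_i$ is there precisely to cancel the parity of $y_i^{N-1-c}$. All heavy lifting was already done in Proposition~\ref{signed:prp} and Lemma~\ref{alt:lem}, so I do not expect a genuine obstacle.
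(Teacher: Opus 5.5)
Your proposal is correct and follows the paper's own proof essentially step for step. You apply Lemma \ref{alt:lem} with $z=y$ and the weights $\alpha_i$ of \eqref{alpha}, identify the factors $(1-y_iy_{i'})/(y_i-y_{i'})$ with $R_{ii'}$ to recover $\Delta(y)$ times \eqref{sum1}, and check row by row that $y_i^c-\alpha_iy_i^{N-1-c}$ reproduces \eqref{rows}, with the factor $(-1)^{N-1}$ cancelling the sign of $(-p^j)^{N-1-c}$ on the rows in $I^-$.
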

\begin{proof}
We apply Lemma \ref{alt:lem} with $z=y$ and with the weights $\alpha_i$ of \eqref{alpha}. Since $(1-y_iy_{i'})/(y_i-y_{i'})=R_{ii'}$, the right-hand side of \eqref{alt} now equals $\Delta(y)$ times the right-hand side of \eqref{sum1}. The rows of the matrix on the left-hand side of \eqref{alt} are $y_i^c-\alpha_iy_i^{N-1-c}=s_\delta^{\,c}p^{jc}-\alpha_is_\delta^{\,N-1-c}p^{j(N-1-c)}$. For $\delta=+$ this is precisely \eqref{rows}. For $\delta=-$ we have $\alpha_is_-^{N-1-c}=(-1)^{N-1}(\theta_-v)^j(-1)^{N-1-c}=(-1)^c(\theta_-v)^j$, which again gives \eqref{rows}. (The sign $(-1)^{N-1}$ in $\alpha_i$ was included precisely to compensate the sign of $(-p^j)^{N-1-c}$.)
\end{proof}

\subsection{Degree and constant term of $\tau$}
\begin{proposition}[Degree and Constant Term]\label{deg:prp}
$\tau$ is a polynomial in $v$ with constant term $1$ and of degree
\begin{equation}\label{deg}
\deg_v\tau=\sum_{i\in \mathcal{I}}j_i=\sum_{\delta\in\{+,-\}}\left[\binom{n^\delta_++1}{2}+\binom{n^\delta_-+1}{2}\right] .
\end{equation}
\end{proposition}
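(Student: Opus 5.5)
The plan is to read the polynomial structure, the constant term and the degree directly off the signed subset expansion \eqref{sum1} of Proposition \ref{signed:prp}, and to use the alternant representation of Proposition \ref{alt:prp} only to confirm that the top coefficient does not vanish.

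In \eqref{sum1} the factors $R_{ii'}$ do not depend on $x$, and each weight $\alpha_i$ is a constant times $v^{j_i}$. Hence the term indexed by $\mathcal{J}$ is a constant multiple of $v^{\sum_{i\in\mathcal{J}}j_i}$, and $\tau$ is a polynomial in $v$. The constant term comes only from $\mathcal{J}=\emptyset$ (all $j_i\geq 1$). Its sign is $(-1)^{\binom{1}{2}}=1$, and it has empty products, so it equals $1$.

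The maximal exponent $\sum_{i\in\mathcal{I}}j_i$ is attained only for $\mathcal{J}=\mathcal{I}$. There the cross product over $i\in\mathcal{J}$, $i'\notin\mathcal{J}$ is empty, so the top coefficient is $\pm\prod_i(\theta_{\delta(i)})^{j_i}$, up to the sign $(-1)^{(N-1)N_-}$, which is nonzero. The same conclusion can be checked from Proposition \ref{alt:prp}. The coefficient of $v^{\sum j_i}$ in $D(v)$ is, up to sign, $\det[s_\delta^c p^{j(N-1-c)}]$. Reversing the column order turns this into, up to sign, the Vandermonde determinant in the distinct $y_i$, which is nonzero. Either route shows that $\deg_v\tau=\sum_{i\in\mathcal{I}}j_i$.

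It remains to evaluate $\sum_{j\in I^\delta}j$ for each $\delta$. Write $d=d_\delta$, $\mu=\mu_\delta$ and $M=\max(n^\delta_+,n^\delta_-)$, so that $M=d+\mu$. Then
\[
\sum_{j\in I^\delta}j=\binom{d+1}{2}+\sum_{t=1}^{\mu}(d+2t)=\binom{d+1}{2}+d\mu+\mu(\mu+1).
\]
We must compare this with $\binom{M+1}{2}+\binom{\mu+1}{2}$. Expanding, $\binom{d+\mu+1}{2}=\binom{d+1}{2}+d\mu+\binom{\mu+1}{2}-\mu+\mu$. Adding $\binom{\mu+1}{2}$ gives $\binom{d+1}{2}+d\mu+\mu(\mu+1)$, which matches. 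This yields \eqref{deg}.

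The only point needing care is the claim that the top coefficient cannot cancel. This is immediate, since exactly one subset, namely $\mathcal{J}=\mathcal{I}$, contributes to that degree. So no real obstacle is expected, just bookkeeping.
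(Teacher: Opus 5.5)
Your proposal is correct and follows essentially the same route as the paper. You read the polynomial structure off the signed expansion \eqref{sum1}, observe that $\mathcal{J}=\emptyset$ gives constant term $1$ and that only $\mathcal{J}=\mathcal{I}$ attains the top exponent (with coefficient $\pm1$), and then sum the labels of $I^\delta$ exactly as the paper does. Your extra cross-check of the leading coefficient via the alternant of Proposition \ref{alt:prp} is valid but redundant, as you note yourself.
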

\begin{proof}
In \eqref{sum1} the factors $R_{ii'}$ do not depend on $v$, so the term corresponding to $\mathcal{J}$ is a constant multiple of $v^{\sum_{i\in \mathcal{J}}j_i}$. The term with $\mathcal{J}=\emptyset$ equals $1$, and the maximal exponent $\sum_{i\in \mathcal{I}}j_i$ is attained only for $\mathcal{J}=\mathcal{I}$, with coefficient $\pm 1$.
Finally, from $I^\delta=\{1,\ldots ,d_\delta\}\cup\{d_\delta+2,\ldots ,d_\delta+2\mu_\delta\}$ one computes
$\sum_{j\in I^\delta}j=\binom{d_\delta+1}{2}+\mu_\delta d_\delta+\mu_\delta(\mu_\delta+1)=\binom{n^\delta_++1}{2}+\binom{n^\delta_-+1}{2}$
(as $\{n^\delta_+,n^\delta_-\}=\{\mu_\delta,\mu_\delta+d_\delta\}$).
\end{proof}

Note that,
since $\Delta(y)$ does not depend on $v$, the degree of the alternant $D(v)=\Delta(y)\tau$ is equal to the degree of $\tau$ (while its constant term is equal to $\Delta(y)$).

\subsection{Coranks, reflection symmetries, and the multiplicity of zeros}
The following elementary lemma is a direct consequence of the Smith normal form of a polynomial matrix,
cf. e.g. \cite[Chapter VI]{gan:theory}. It permits the passage from a kernel of dimension $k$ to a zero of order $k$ in the determinant, and as such it constitutes a crucial
tool for detecting factors of higher multiplicity in determinant evaluations, cf. \cite[Section 2.4]{kra:advanced} and \cite[Section 2]{kra:andrews-burge}.

\begin{lemma}[Corank Bound]\label{corank:lem}
Let $\mathbf{A}(v)$ be an $N\times N$ matrix whose entries are polynomials in $v$, and let $v_0\in\mathbb{C}$. If
$\operatorname{corank}\mathbf{A}(v_0)\geq k\in\mathbb{N}_0$, then $(v-v_0)^k$ divides $\det\mathbf{A}(v)$.
\end{lemma}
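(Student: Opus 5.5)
The plan is to prove the Corank Bound via the Smith normal form of $\mathbf{A}(v)$ over the principal ideal domain $\mathbb{C}[v]$. In outline: reduce $\mathbf{A}(v)$ by unimodular transformations to a diagonal form, and read off both the corank at $v_0$ and the $v_0$-adic valuation of $\det\mathbf{A}(v)$ from the invariant factors.

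\textbf{Step 1 (Smith normal form).} I would first dispose of the trivial case: if $\det\mathbf{A}(v)\equiv 0$ there is nothing to prove, since the zero polynomial is divisible by $(v-v_0)^k$. Otherwise, by the structure theory of matrices over $\mathbb{C}[v]$, there exist matrices $\mathbf{U}(v),\mathbf{V}(v)$ with polynomial entries and nonzero constant determinants such that
\begin{equation*}
\mathbf{U}(v)\mathbf{A}(v)\mathbf{V}(v)=\operatorname{diag}\bigl(e_1(v),\ldots ,e_N(v)\bigr).
\end{equation*}
Here the invariant factors $e_1(v),\ldots ,e_N(v)$ are monic polynomials with $e_1\mid e_2\mid\cdots\mid e_N$, and $\det\mathbf{A}(v)=c\prod_{r}e_r(v)$ for some constant $c\neq 0$.

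\textbf{Step 2 (specialize at $v_0$).} Setting $v=v_0$ gives $\mathbf{U}(v_0)\mathbf{A}(v_0)\mathbf{V}(v_0)=\operatorname{diag}(e_r(v_0))$. Since $\mathbf{U}(v_0)$ and $\mathbf{V}(v_0)$ are invertible complex matrices, their determinants being the nonzero constants from Step 1, this identity preserves rank. Hence $\operatorname{corank}\mathbf{A}(v_0)$ equals the number of indices $r$ with $e_r(v_0)=0$. By hypothesis at least $k$ of the $e_r$ vanish at $v_0$.

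\textbf{Step 3 (conclude).} Each $e_r$ with $e_r(v_0)=0$ is divisible by $(v-v_0)$. There are at least $k$ such factors in the product $\det\mathbf{A}(v)=c\prod_r e_r(v)$, so $(v-v_0)^k$ divides $\det\mathbf{A}(v)$.

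\textbf{Alternative route.} If one prefers to avoid invoking the Smith normal form, choose a basis of $\mathbb{C}^N$ whose first $k$ vectors span $\ker\mathbf{A}(v_0)$, and let $\mathbf{P}$ be the constant invertible matrix with these basis vectors as columns. Then the first $k$ columns of $\mathbf{A}(v)\mathbf{P}$ vanish at $v_0$. Each of these columns is therefore divisible entrywise by $(v-v_0)$, and multilinearity of the determinant in the columns yields $(v-v_0)^k\mid\det(\mathbf{A}(v)\mathbf{P})=\det\mathbf{P}\,\det\mathbf{A}(v)$.

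This second argument is fully elementary, and it is the one I would write out. There is no real obstacle. The only point needing care is that $\mathbf{P}$ must be constant, i.e.\ independent of $v$, so that multiplying by it preserves both the polynomial nature of the entries and divisibility.
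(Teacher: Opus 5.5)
Your proposal is correct, and the elementary route you say you would write out is exactly the paper's proof: a constant invertible matrix with $k$ of its columns in $\ker\mathbf{A}(v_0)$, so that the corresponding columns of $\mathbf{A}(v)\mathbf{Q}$ are divisible by $v-v_0$. Your Smith normal form argument is also valid and corresponds to the paper's remark that the lemma follows from that normal form. One small wording fix: when $\operatorname{corank}\mathbf{A}(v_0)>k$, the first $k$ basis vectors should be linearly independent vectors lying in $\ker\mathbf{A}(v_0)$, not vectors spanning it.
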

\begin{proof}
Pick an invertible constant matrix $\mathbf{Q}$ whose last $k$ columns lie in the kernel of $\mathbf{A}(v_0)$. The last $k$ columns of $\mathbf{A}(v)\mathbf{Q}$ vanish at $v=v_0$, so each of their entries is divisible by $v-v_0$; pulling out these factors gives $(v-v_0)^k\mid\det(\mathbf{A}(v)\mathbf{Q})=\det\mathbf{A}(v)\det\mathbf{Q}$.
\end{proof}

For square matrices $\mathbf{A}$
that intertwine a permutation representation of the reflection group $\mathbb{Z}_2$ on a subspace of the column space with a sign representation on the row space, the following lemma provides a lower bound on the corank of $\mathbf{A}$.

\begin{lemma}[Coranks and Reflection Symmetries]\label{refl:lem}
Let $\mathbf{A}=[A_{r,c}]$ be an $N\times N$ matrix with rows labeled by $r\in\{1,\ldots ,N\}$ and columns labeled by $c\in\{0,\ldots ,N-1\}$. Let $J\subseteq\{0,\ldots ,N-1\}$ be stable under an involution $c\mapsto c'$ with $P$ two-element orbits and $F$ fixed points, and assume that for every row $r$ there is a sign $\lambda_r\in\{1,-1\}$ such that $A_{r,c'}=\lambda_rA_{r,c}$ for all $c\in J$. Let $R_\pm:=\#\{r:\lambda_r=\pm1\}$. Then
\begin{equation*}
\operatorname{corank}\mathbf{A}\geq\max\bigl(P+F-R_+,\ P-R_-\bigr).
\end{equation*}
\end{lemma}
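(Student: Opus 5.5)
The plan is to exhibit explicit subspaces of the column space on which $\mathbf{A}$ acts with a small image, and then to apply rank–nullity. The space $\mathbb{C}^N$ of column vectors is indexed by $c\in\{0,\ldots,N-1\}$, with standard basis $e_c$. Using the involution on $J$, I will define two subspaces, both supported on $J$:
\begin{itemize}
\item the \emph{antisymmetric} subspace $V_-$, spanned by the vectors $e_c-e_{c'}$, one for each two-element orbit $\{c,c'\}$;
\item the \emph{symmetric} subspace $V_+$, spanned by the vectors $e_c+e_{c'}$ for the two-element orbits together with the vectors $e_c$ for the fixed points $c=c'$.
\end{itemize}
These spanning vectors have pairwise disjoint supports, so $\dim V_-=P$ and $\dim V_+=P+F$.

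The key computation is the action of row $r$ on these vectors, using $A_{r,c'}=\lambda_rA_{r,c}$ for $c\in J$.
\begin{itemize}
\item For $e_c-e_{c'}$ the entry of $\mathbf{A}(e_c-e_{c'})$ in row $r$ is $(1-\lambda_r)A_{r,c}$. This vanishes whenever $\lambda_r=1$.
\item For $e_c+e_{c'}$ the entry in row $r$ is $(1+\lambda_r)A_{r,c}$. This vanishes whenever $\lambda_r=-1$.
\item For a fixed point $c=c'$ the hypothesis reads $A_{r,c}=\lambda_rA_{r,c}$. So $A_{r,c}=0$ whenever $\lambda_r=-1$, and again the entry of $\mathbf{A}e_c$ in row $r$ vanishes for those rows.
\end{itemize}
Hence $\mathbf{A}(V_-)$ lies in the coordinate subspace of the row space spanned by the rows with $\lambda_r=-1$, which has dimension $R_-$. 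Likewise $\mathbf{A}(V_+)$ lies in a coordinate subspace of dimension $R_+$.

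Rank–nullity applied to the restriction $\mathbf{A}|_{V_-}$ gives $\dim(\ker\mathbf{A}\cap V_-)\geq P-R_-$. Applied to $\mathbf{A}|_{V_+}$ it gives $\dim(\ker\mathbf{A}\cap V_+)\geq P+F-R_+$. Since $\operatorname{corank}\mathbf{A}=\dim\ker\mathbf{A}$ dominates the dimension of each of these intersections, the asserted bound $\operatorname{corank}\mathbf{A}\geq\max(P+F-R_+,\,P-R_-)$ follows.

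No step here is a genuine obstacle. The only point requiring care is the treatment of the fixed points: they contribute only to the symmetric subspace, because at a fixed point the hypothesis forces the entry to vanish exactly in the rows with $\lambda_r=-1$. This is what produces the asymmetry $P+F$ versus $P$ between the two bounds.
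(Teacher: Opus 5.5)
Your proof is correct and follows essentially the same route as the paper: you build the symmetric subspace $V_+$ (from $\mathbf{e}_c+\mathbf{e}_{c'}$ and the fixed points) and the antisymmetric subspace $V_-$, observe that $\mathbf{A}$ maps them into the coordinate subspaces of rows with $\lambda_r=1$ and $\lambda_r=-1$ respectively, and conclude by rank--nullity. Your remark that the fixed points belong only to $V_+$, because the hypothesis forces $A_{r,c}=0$ in rows with $\lambda_r=-1$, is exactly the point that produces $P+F$ versus $P$ in the paper's argument.
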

\begin{proof}
Let $\mathbf{e}_c$ denote the standard basis vectors. The vectors $\mathbf{e}_c+\mathbf{e}_{c'}$ (one for each two-element orbit $\{c,c'\}$) together with the vectors $\mathbf{e}_c$ (one for each fixed point $c$) involve pairwise disjoint indices and thus span a subspace $V_+$ of dimension $P+F$. For an orbit $\{c,c'\}$, the $r$-th entry of $\mathbf{A}(\mathbf{e}_c+\mathbf{e}_{c'})$ is $(1+\lambda_r)A_{r,c}$. For a fixed point $c$, on the other hand, one has $A_{r,c}=\lambda_rA_{r,c}$, which implies that $A_{r,c}=0$ whenever $\lambda_r=-1$. Hence $\mathbf{A}$ maps $V_+$ into the coordinate subspace spanned by the rows with $\lambda_r=1$, which has dimension $R_+$, and therefore $\dim(\ker\mathbf{A}\cap V_+)\geq P+F-R_+$. Similarly, the $P$ vectors $\mathbf{e}_c-\mathbf{e}_{c'}$ span a subspace $V_-$ that is mapped by $\mathbf{A}$ into the coordinate subspace of the rows with $\lambda_r=-1$ (the $r$-th entry of $\mathbf{A}(\mathbf{e}_c-\mathbf{e}_{c'})$ being $(1-\lambda_r)A_{r,c}$), whence $\dim(\ker\mathbf{A}\cap V_-)\geq P-R_-$.
\end{proof}

\subsection{The corank bound at $v=\sigma p^m$}
The asserted factorization of $\tau$ in Theorem \ref{tent:thm} anticipates that the roots of the alternant $D(v)$ in Proposition \ref{alt:prp} should only arise at certain values of $v$ of the form
 $v_0:=\sigma p^m$, where $\sigma\in\{1,-1\}$ and $m\in\mathbb{Z}$ with $|m|<N$.
With the aid of Lemmas \ref{corank:lem} and \ref{refl:lem}, we will now deduce a lower bound for the order of
vanishing of $D(v)$ at the points in question (not every point of this form need be an actual zero).
To this end let us define
\begin{equation}\label{J}
\begin{split}
\ell &:=m+N-1, \qquad c':=\ell-c, \\
J &:=\Bigl\{c\in\{0,\ldots ,N-1\}: \  c'\in\{0,\ldots ,N-1\}\Bigr\} .
\end{split}
\end{equation}
Explicitly, $J=\{\max(0,m),\ldots ,\min(N-1,m+N-1)\}$, so that $h:=|J|=N-|m|$. Notice that the involution $c\mapsto c'$ of $J$, which encodes the reflection of the column indices about $\ell/2$, has $P=\lfloor h/2\rfloor$ two-element orbits and $F$ fixed points, where $F:=(h\bmod 2)\in \{0,1\}$.

For a row $i\in\mathcal{I}$ with sign $\delta$ and label $j$ we abbreviate $\kappa_i:=(\theta_\delta\sigma)^j$. Since
\begin{equation*}
(\theta_\delta v_0)^jp^{j(N-1-c)}=\kappa_ip^{j(m+N-1-c)}=\kappa_ip^{jc'} ,
\end{equation*}
it is seen from \eqref{rows} that in this notation
$A_{i,c}(v_0)=s_\delta^{\,c}[p^{jc}-\kappa_ip^{jc'}]$ for all $c$. In particular, for $c\in J$ the reflection $c\mapsto c'$ changes the entries of the $i$-th row at most by a sign:
\begin{equation}\label{lambda}
\begin{split}
A_{i,c'}(v_0)=s_\delta^{\,c'}\bigl[p^{jc'}-\kappa_ip^{jc}\bigr]=\lambda_iA_{i,c}(v_0),\\
\text{with}\ \lambda_i:=-\kappa_is_\delta^{\,\ell}=-(\theta_\delta\sigma)^{j}s_\delta^{\,\ell}\in\{1,-1\}
\end{split}
\end{equation}
(where we used that $\kappa_i^2=1$ and $s_\delta^{\,c+c'}=s_\delta^{\,\ell}$).
Notice that the row sign $\lambda_i$ depends on the row $i$ only through its sign $\delta(i)$ and the parity of its label $j_i$, which is what will allow us to evaluate the sum of the row signs in closed form in Subsection \ref{sec5:M} below.

\begin{proposition}[Vanishing Order Bound]\label{order:prp}
Let $\sigma\in\{1,-1\}$ and $m\in\mathbb{Z}$ with $|m|<N$, let $v_0:=\sigma p^m$ and $F:=(N-|m|)\bmod 2$, and let $\Lambda:=\sum_{i\in \mathcal{I}}\lambda_i$ with $\lambda_i$ as in \eqref{lambda}. Then
\begin{subequations}
\begin{equation}\label{bound}
\operatorname{ord}_{v=v_0}D(v)\geq\max\Bigl(0,\tfrac12\bigl(M-|m|\bigr)\Bigr),\quad\text{with}\  M:=|F-\Lambda| ,
\end{equation}
where moreover
\begin{equation}\label{Wpar}
M\equiv m\pmod 2
\end{equation}
\end{subequations}
(so the lower bound in \eqref{bound} is an integer).
\end{proposition}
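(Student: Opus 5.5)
The plan is to combine Lemma \ref{refl:lem} (applied to $\mathbf{A}(v_0)$ with the index set $J$ and involution $c\mapsto c'$ from \eqref{J}) with Lemma \ref{corank:lem}. By \eqref{lambda} the hypotheses of Lemma \ref{refl:lem} hold with row signs $\lambda_i$, and $J$ has $P=\lfloor h/2\rfloor$ two-element orbits and $F$ fixed points, where $h=N-|m|$, so $P=(h-F)/2$. Writing $R_\pm=\#\{i:\lambda_i=\pm1\}$, we have $R_++R_-=N$ and $R_+-R_-=\Lambda$, hence $R_\pm=(N\pm\Lambda)/2$. Lemma \ref{refl:lem} gives
\[
\operatorname{corank}\mathbf{A}(v_0)\geq \max\bigl(P+F-R_+,\,P-R_-\bigr).
\]
The two arguments compute as
\[
P+F-R_+=\tfrac12\bigl(h+F-N-\Lambda\bigr)=\tfrac12\bigl(F-\Lambda-|m|\bigr),\qquad P-R_-=\tfrac12\bigl(h-F-N+\Lambda\bigr)=\tfrac12\bigl(\Lambda-F-|m|\bigr).
\]
Their maximum is therefore $\tfrac12(|F-\Lambda|-|m|)=\tfrac12(M-|m|)$. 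Since a corank is nonnegative, we get $\operatorname{corank}\mathbf{A}(v_0)\geq\max(0,\tfrac12(M-|m|))$. Lemma \ref{corank:lem} then converts this corank bound into the vanishing order bound \eqref{bound} for $D(v)=\det\mathbf{A}(v)$. This step needs the bound to be an integer, which is exactly where \eqref{Wpar} enters.

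For the parity statement \eqref{Wpar}: each $\lambda_i\equiv1\pmod 2$, so $\Lambda\equiv N\pmod 2$. Also $F\equiv N-|m|\pmod 2$. Hence $M=|F-\Lambda|\equiv F+\Lambda\equiv 2N-|m|\equiv m\pmod 2$. Consequently $M-|m|$ is even, and the bound in \eqref{bound} is an integer, so $k=\max(0,\tfrac12(M-|m|))$ is admissible in Lemma \ref{corank:lem}.

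There is no serious obstacle here; the only care needed is bookkeeping. One should check that the subspaces $V_\pm$ used in Lemma \ref{refl:lem} are well defined for this $J$, and they are, since $J$ is stable under $c\mapsto\ell-c$ by construction. One should also make sure the sign conventions in $R_\pm$ versus $\Lambda$ match so that the two arguments of the maximum come out as $\pm(F-\Lambda)$ shifted by $-|m|$. The nontrivial work, the closed-form evaluation of $\Lambda$, is deferred to the subsequent results and is not needed for this bound.
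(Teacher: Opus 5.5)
Your proposal is correct and follows the paper's proof essentially step for step. You apply Lemma \ref{refl:lem} with $P+F=\lceil h/2\rceil$, $P=\lfloor h/2\rfloor$, $R_\pm=\tfrac12(N\pm\Lambda)$ and $h-N=-|m|$ to get the two bounds $\tfrac12(\pm(F-\Lambda)-|m|)$, derive \eqref{Wpar} from $\Lambda\equiv N$ and $F\equiv N-|m|$, and then invoke Lemma \ref{corank:lem}, exactly as in the paper.
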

\begin{proof}
By Lemma \ref{refl:lem}, applied to $\mathbf{A}(v_0)$ with the rows labeled by $\mathcal{I}$ in the ordering fixed in Subsection \ref{sec5:alt}, we have that
\begin{equation*}
\operatorname{corank}\mathbf{A}(v_0)\geq\max(\lceil h/2\rceil-R_+,\lfloor h/2\rfloor-R_-) ,
\end{equation*}
where the two alternatives stem from the symmetric combinations of the paired columns together with the unpaired column when $h$ is odd, and from the antisymmetric combinations, respectively (which of the two is sharper depends on the sign of $F-\Lambda$).
Since $\Lambda=R_+-R_-$ and $R_++R_-=N$, one has $R_\pm=\frac12(N\pm\Lambda)$; using $\lceil h/2\rceil=\frac12(h+F)$, $\lfloor h/2\rfloor=\frac12(h-F)$ and $h-N=-|m|$, we thus get
$\lceil h/2\rceil-R_+=\frac12(-|m|+(F-\Lambda))$ and $\lfloor h/2\rfloor-R_-=\frac12(-|m|-(F-\Lambda))$, whence
$\operatorname{corank}\mathbf{A}(v_0)\geq \max\bigl(0,\frac12(M-|m|)\bigr)$.
Since $\Lambda\equiv R_++R_-=N$ and $F\equiv h\equiv N-m \pmod 2$, we have moreover that $M\equiv F-\Lambda\equiv m\pmod 2$, which is \eqref{Wpar}.
In particular the lower bound is a nonnegative integer, so Lemma \ref{corank:lem} yields \eqref{bound}.
\end{proof}

\subsection{Computation of the tent size $M$}\label{sec5:M}
For $\varepsilon\in\{1,-1\}$ and $\delta\in\{+,-\}$ we put $S_\delta(\varepsilon):=\sum_{j\in I^\delta}\varepsilon^j$, and we abbreviate $S_\delta:=S_\delta(\theta_\delta\sigma)$.
Summing $\lambda_i$ \eqref{lambda} over the rows gives $\Lambda=-S_+-(-1)^{\ell}S_-$.
Now $(-1)^\ell=(-1)^{m+N-1}$ is equal to $-1$ if $m\equiv N$ (in which case $h$ is even and $F=0$), and equal to $1$ if $m\not\equiv N$ (in which case $F=1$). With the notation $\bar{n}:=-1-n$ (for integers $n$) we therefore have
\begin{equation}\label{W}
M=|F-\Lambda|=\begin{cases}|S_+-S_-|&\text{if}\ m\equiv N\pmod 2,\\[.5ex]
|1+S_++S_-|=|S_+-\overline{S_-}|&\text{if}\ m\not\equiv N\pmod 2 .\end{cases}
\end{equation}

\begin{lemma}[Tent Size Parameters]\label{S:lem}
For $\sigma\in \{ 1,-1\}$, let $k_\delta:=n^\delta_+$ if $\sigma=1$ and $k_\delta:=n^\delta_-$ if $\sigma=-1$ (not to be confused with the parameters $a_\pm$ of Theorem \ref{awc:thm}). Then $S_\delta(\theta_\delta\sigma)\in\{k_\delta,\overline{k_\delta}\}$.
\end{lemma}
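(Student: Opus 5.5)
The plan is to evaluate $S_\delta(\varepsilon)=\sum_{j\in I^\delta}\varepsilon^j$ directly from the explicit shape $I^\delta=\{1,\ldots,d_\delta\}\cup\{d_\delta+2,d_\delta+4,\ldots,d_\delta+2\mu_\delta\}$ recalled in Subsection~\ref{sec5:not}. I will split according to the value of $\varepsilon=\theta_\delta\sigma\in\{1,-1\}$. In each case I will identify $k_\delta$ with either $N_\delta=\max(n^\delta_+,n^\delta_-)$ or $\mu_\delta=\min(n^\delta_+,n^\delta_-)$, using the definition of $\theta_\delta$ in \eqref{eps-eta}.

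\emph{Case $\theta_\delta\sigma=1$.} Here every term of the sum equals $1$, so $S_\delta=|I^\delta|=N_\delta$. Since $\sigma=\theta_\delta$, there are two possibilities.
\begin{itemize}
\item If $\sigma=1$, then $\theta_\delta=1$, so $n^\delta_+\geq n^\delta_-$ and $k_\delta=n^\delta_+=N_\delta$.
\item If $\sigma=-1$, then $\theta_\delta=-1$, so $n^\delta_+<n^\delta_-$ and $k_\delta=n^\delta_-=N_\delta$.
\end{itemize}
In both subcases $S_\delta=k_\delta$.

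\emph{Case $\theta_\delta\sigma=-1$.} Now $\sigma=-\theta_\delta$, and again there are two possibilities.
\begin{itemize}
\item If $\sigma=1$, then $\theta_\delta=-1$, so $n^\delta_+<n^\delta_-$ and $k_\delta=n^\delta_+=\mu_\delta$.
\item If $\sigma=-1$, then $\theta_\delta=1$, so $n^\delta_+\geq n^\delta_-$ and $k_\delta=n^\delta_-=\mu_\delta$.
\end{itemize}
It thus suffices to show $S_\delta(-1)\in\{\mu_\delta,\overline{\mu_\delta}\}$. The initial segment contributes $\sum_{j=1}^{d_\delta}(-1)^j$, which is $0$ for $d_\delta$ even and $-1$ for $d_\delta$ odd. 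The tail consists of $\mu_\delta$ labels, all of the parity of $d_\delta$, so it contributes $(-1)^{d_\delta}\mu_\delta$. Hence $S_\delta(-1)=\mu_\delta$ when $d_\delta$ is even, and $S_\delta(-1)=-1-\mu_\delta=\overline{\mu_\delta}$ when $d_\delta$ is odd.

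There is no real obstacle here; the only care needed is the case bookkeeping matching $\theta_\delta$ and $\sigma$ to $\max$ versus $\min$. As a by-product, the proof records which alternative occurs: $S_\delta=k_\delta$, except when $\theta_\delta\sigma=-1$ and $d_\delta$ is odd, where $S_\delta=\overline{k_\delta}$. This refinement will be needed when inserting the result into \eqref{W} to identify $M$ with the tent sizes $n^+_\pm+n^-_\pm+1$ and $|n^+_\pm-n^-_\pm|$ of Theorem~\ref{tent:thm}.
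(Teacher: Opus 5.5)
Your proof is correct and matches the paper's argument step for step: the same case split on $\theta_\delta\sigma$, the same identification of $k_\delta$ with $N_\delta$ or $\mu_\delta$, and the same parity count for $S_\delta(-1)$ over the initial segment and the tail of $I^\delta$. One small correction to your closing remark: Proposition~\ref{W:prp} uses only the membership $S_\delta\in\{k_\delta,\overline{k_\delta}\}$ together with the parity relation $M\equiv m \pmod 2$, so the finer case information you record is not actually needed downstream.
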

\begin{proof}
If $\theta_\delta\sigma=1$, then either $\sigma=1$ and $n^\delta_+\geq n^\delta_-$, or $\sigma=-1$ and $n^\delta_+<n^\delta_-$; in both cases $k_\delta=\max(n^\delta_+,n^\delta_-)=N_\delta$, and $S_\delta(1)=|I^\delta|=N_\delta=k_\delta$.
If $\theta_\delta\sigma=-1$, then either $\sigma=1$ and $n^\delta_+<n^\delta_-$, or $\sigma=-1$ and $n^\delta_+\geq n^\delta_-$; in both cases $k_\delta=\mu_\delta=\min(n^\delta_+,n^\delta_-)$. The labels $1,\ldots ,d_\delta$ contribute $\sum_{j=1}^{d_\delta}(-1)^j$ to $S_\delta(-1)$, which is $0$ if $d_\delta$ is even and $-1$ if $d_\delta$ is odd. The remaining $\mu_\delta$ labels $d_\delta+2,\ldots ,d_\delta+2\mu_\delta$ all have the parity of $d_\delta$ and contribute $\mu_\delta(-1)^{d_\delta}$. Hence $S_\delta(-1)=\mu_\delta=k_\delta$ if $d_\delta$ is even and $S_\delta(-1)=-1-\mu_\delta=\overline{k_\delta}$ if $d_\delta$ is odd.
\end{proof}

\begin{proposition}[Tent Size]\label{W:prp}
With $k_\pm$ as specified in Lemma \ref{S:lem}, one has
\begin{equation}\label{Wfinal}
M=\begin{cases}k_++k_-+1&\text{if}\ m\equiv k_++k_-+1\pmod 2,\\[.5ex] |k_+-k_-|&\text{if}\ m\equiv k_++k_-\pmod 2 .\end{cases}
\end{equation}
\end{proposition}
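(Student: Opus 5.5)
The plan is to combine the case formula \eqref{W} for $M$ with Lemma \ref{S:lem}, and to keep track of parities. Two elementary observations on the bar operation $\bar{n}=-1-n$ will be used throughout. First, for all integers $a,b$,
\[
|\bar a-\bar b|=|a-b|,\qquad \bar a\equiv a+1 \pmod 2 .
\]
Second, for nonnegative integers $a,b$,
\[
|a-\bar b|=|\bar a-b|=a+b+1 .
\]

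The key extra input is a parity relation linking $N$ to $S_\pm$. Since $S_\delta=S_\delta(\theta_\delta\sigma)$ is a sum of $N_\delta=|I^\delta|$ terms, each equal to $\pm1$, we have $S_\delta\equiv N_\delta \pmod 2$. Hence
\[
N=N_++N_-\equiv S_++S_- \pmod 2 .
\]
By Lemma \ref{S:lem}, each $S_\delta$ is of one of two types: either $S_\delta=k_\delta$ ("plain") or $S_\delta=\overline{k_\delta}$ ("barred"). If $S_+$ and $S_-$ are of the same type, then $S_++S_-\equiv k_++k_-$, because two bars contribute two parity flips that cancel. If they are of mixed type, then $S_++S_-\equiv k_++k_-+1$.

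Next, split according to the two cases of \eqref{W}.

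\emph{Case $m\equiv N$.} Here $M=|S_+-S_-|$.
\begin{itemize}
\item If $S_\pm$ are of the same type, the first observation gives $M=|k_+-k_-|$. In this situation $m\equiv N\equiv k_++k_-$.
\item If $S_\pm$ are of mixed type, the second observation gives $M=k_++k_-+1$. In this situation $m\equiv N\equiv k_++k_-+1$.
\end{itemize}

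\emph{Case $m\not\equiv N$.} Here $M=|S_+-\overline{S_-}|$, and $\overline{S_-}$ has the type opposite to that of $S_-$ (using $\bar{\bar n}=n$).
\begin{itemize}
\item If $S_\pm$ are of mixed type, then $S_+$ and $\overline{S_-}$ are of the same type, so $M=|k_+-k_-|$. Moreover $N\equiv k_++k_-+1$, hence $m\equiv k_++k_-$.
\item If $S_\pm$ are of the same type, then $S_+$ and $\overline{S_-}$ are of mixed type, so $M=k_++k_-+1$. Moreover $N\equiv k_++k_-$, hence $m\equiv k_++k_-+1$.
\end{itemize}

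In all four subcases, $M$ depends only on the parity of $m$ relative to $k_++k_-$, exactly as stated in \eqref{Wfinal}. As a consistency check, this agrees with the parity relation \eqref{Wpar}.

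There is no real obstacle here. The one step that needs care is the parity fact $S_\delta\equiv N_\delta$, which transfers the condition "$m\equiv N$" into a condition on $k_++k_-$. Once that is in place, the rest is the bookkeeping of the four type combinations above.
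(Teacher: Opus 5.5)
Your proof is correct. It reaches the conclusion by a slightly different route from the paper.

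Both arguments start the same way. From \eqref{W} and Lemma~\ref{S:lem}, together with $|\bar a-\bar b|=|a-b|$ and $|a-\bar b|=a+b+1$, one gets $M\in\{|k_+-k_-|,\,k_++k_-+1\}$.

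The paper then finishes in one line. The two candidates have opposite parities, and $M\equiv m\pmod 2$ is already available as \eqref{Wpar} from Proposition~\ref{order:prp}, so the parity of $m$ picks the value.

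You use \eqref{Wpar} only as an after-the-fact check. Instead, you use $S_\delta\equiv N_\delta$ to turn the case split $m\equiv N$ versus $m\not\equiv N$ in \eqref{W} into a condition on $k_++k_-$ and the plain/barred types, and then run through four subcases.

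This is the same parity input in another form. Since $\Lambda=-S_+-(-1)^{\ell}S_-\equiv S_++S_-\pmod 2$, your congruence $S_++S_-\equiv N$ is exactly the fact $\Lambda\equiv N$ used to prove \eqref{Wpar}.

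What each route buys:
\begin{itemize}
\item Your version is self-contained given only \eqref{W} and Lemma~\ref{S:lem}. It also records which type combination occurs in each parity case.
\item The paper's version is shorter, because it reuses the parity bookkeeping already done in Proposition~\ref{order:prp}. There the four-way split collapses to the observation that the two candidates differ in parity.
\end{itemize}
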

\begin{proof}
For integers $a,b\geq 0$ one has $|a-b|=|\bar a-\bar b|$ and $|a-\bar b|=|\bar a-b|=a+b+1$. By \eqref{W} and Lemma \ref{S:lem}, $M$ is of the form $|n_+-n_-|$ with $n_+\in\{k_+,\overline{k_+}\}$ and $n_-\in\{k_-,\overline{k_-}\}$, so $M\in\{|k_+-k_-|,k_++k_-+1\}$. These two numbers have opposite parities, and $M\equiv m$ by \eqref{Wpar}.
\end{proof}

\subsection{Comparison with the tent polynomials, and conclusion of the proof}

\begin{proposition}[Comparison of Vanishing Orders]\label{zeros:prp}
Let $\sigma\in\{1,-1\}$ and $m\in\mathbb{Z}$, and let $k_\pm$ be as specified in Lemma \ref{S:lem}. Then $D(v)$ vanishes at $v=v_0=\sigma p^m$ at least to the order with which the factor $1+\sigma up^{-m}=1-\sigma p^{-m}v$ occurs in
$T(k_++k_-+1;\sigma u)\,T(|k_+-k_-|;\sigma u)$.
\end{proposition}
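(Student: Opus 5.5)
The plan is to compare the lower bound from Proposition \ref{order:prp} with the multiplicity read off from the definition \eqref{tent} of the tent polynomials. First I identify the factor: since $v=-u$, the factor $1+\sigma u p^{-m}$ equals $1-\sigma p^{-m}v$, which vanishes exactly at $v=\sigma p^m=v_0$. In $T(M';\sigma u)=\prod_{m'}(1+\sigma u p^{m'})^{\mathrm{t}_{M'}(m')}$ this factor corresponds to $m'=-m$. By the symmetry $\mathrm{t}_{M'}(-m)=\mathrm{t}_{M'}(m)$, its multiplicity in $T(k_++k_-+1;\sigma u)\,T(|k_+-k_-|;\sigma u)$ is
\[
\mathrm{t}_{k_++k_-+1}(m)+\mathrm{t}_{|k_+-k_-|}(m).
\]

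Next I use parity to kill one of the two terms. The integers $k_++k_-+1$ and $|k_+-k_-|$ have opposite parities. Hence exactly one of them, call it $M'$, satisfies $M'\equiv m \pmod 2$. The other tent exponent vanishes at $m$ by definition. So the multiplicity of the factor is $\mathrm{t}_{M'}(m)=\max\bigl(0,\tfrac12(M'-|m|)\bigr)$.

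Then I match $M'$ with $M$. By Proposition \ref{W:prp}, the tent size $M=|F-\Lambda|$ is precisely that one of $k_++k_-+1$ and $|k_+-k_-|$ having the parity of $m$. So $M=M'$.

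Finally I split on the size of $|m|$.
\begin{itemize}
\item[$\cdot$] If $|m|<N$, Proposition \ref{order:prp} gives $\operatorname{ord}_{v_0}D\geq\max(0,\tfrac12(M-|m|))$, which is exactly the required multiplicity.
\item[$\cdot$] If $|m|\geq N$, I must show the multiplicity is $0$, i.e.\ that $M'\leq |m|$. This holds provided $k_++k_-+1\leq N$ and $|k_+-k_-|\leq N$. Each $k_\delta$ lies in $\{n^\delta_+,n^\delta_-\}$, so $k_\delta\leq N_\delta$ and $k_++k_-\leq N$.
\end{itemize}
The case $k_++k_-=N$ needs care, since then $k_++k_-+1=N+1$ could exceed $|m|=N$. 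I avoid this by parity: if $|m|=N$, the relevant $M'$ has the parity of $N$, hence $M'\neq N+1$, and it is either $|k_+-k_-|\leq N$ or $k_++k_-+1\leq N-1$. If $|m|\geq N+1$, then $M'\leq N+1\leq|m|$ directly. In all cases the multiplicity is zero and the claim holds trivially.

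The main obstacle is this boundary bookkeeping. Everything else is a direct combination of Propositions \ref{order:prp} and \ref{W:prp} with the definition of $\mathrm{t}_M$.
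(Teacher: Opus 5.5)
Your proposal is correct and follows the paper's argument: you take the tent multiplicity $\mathrm{t}_{M'}(m)$ for the unique $M'$ with the parity of $m$, identify it with $M$ via Proposition \ref{W:prp}, and conclude with Proposition \ref{order:prp}; the paper only handles the range condition more economically, observing that a positive multiplicity forces $|m|\le M-2\le k_++k_--1\le N-1$, which makes your separate treatment of $|m|\ge N$ unnecessary. One small slip in that case: at $|m|=N$ parity only gives $k_++k_-+1\le N$, not $\le N-1$, but $M'\le N=|m|$ is all you need, so your conclusion stands.
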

\begin{proof}
Given an integer $M'\geq 0$, the factor $1+\sigma up^{-m}$ occurs in $T(M';\sigma u)$ with multiplicity $\mathrm{t}_{M'}(-m)=\mathrm{t}_{M'}(m)$, which by \eqref{tent} equals $\max(0,\frac12(M'-|m|))$ if $m\equiv M'$ and $0$ otherwise. Exactly one of the two numbers $M'=k_++k_-+1$ and $M'=|k_+-k_-|$ has the parity of $m$, and by \eqref{Wfinal} this number is the tent size $M$. Hence the multiplicity in question equals $\max(0,\frac12(M-|m|))$. If this multiplicity is positive, then $|m|\leq M-2\leq k_++k_--1\leq N_++N_--1=N-1$ (as $k_\delta\leq N_\delta$), so Proposition \ref{order:prp} applies and yields the claim.
\end{proof}

\begin{lemma}[Degree Count for the Tent Formula]\label{count:lem}
A tent polynomial has the degree $\deg_v T(M;\sigma u)=\lfloor M^2/4\rfloor$. Consequently, for all integers $a,b\geq 0$,
\begin{equation}\label{degtent}
\deg_v T(a+b+1;\sigma u)+\deg_v T(|a-b|;\sigma u)=\binom{a+1}{2}+\binom{b+1}{2} .
\end{equation}
\end{lemma}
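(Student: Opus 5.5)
The plan is a direct computation from the definition \eqref{tent}. Since $u=-v$ and $\sigma\in\{1,-1\}$, each factor $1+\sigma u p^{m}=1-\sigma p^m v$ is linear in $v$. Hence $\deg_v T(M;\sigma u)=\sum_{m\in\mathbb{Z}}\mathrm{t}_M(m)$, and the task reduces to summing the multiplicities.

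\emph{Step 1: the single-tent degree.} I would split by the parity of $M$.

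For $M=2k$ even, the exponents are $m=2r$ with $|r|<k$, and $\mathrm{t}_M(2r)=k-|r|$. The sum is therefore
\[
k+2\sum_{r=1}^{k-1}(k-r)=k+k(k-1)=k^2=M^2/4 .
\]
For $M=2k+1$ odd, the exponents are $m=\pm(2r+1)$ with $0\le r<k$, and the multiplicity is $(2k+1-2r-1)/2=k-r$. The sum is
\[
2\sum_{r=0}^{k-1}(k-r)=k(k+1)=\lfloor M^2/4\rfloor .
\]
The degenerate cases $M=0,1$ give $0$, consistent with $T(0;w)=T(1;w)=1$.

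\emph{Step 2: the identity \eqref{degtent}.} By symmetry in $a,b$, assume $a\ge b$. The quantities $a+b+1$ and $a-b$ have opposite parities, so exactly one of them is even.

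If $a+b$ is even, the left-hand side equals
\[
\frac{(a+b+1)^2-1}{4}+\frac{(a-b)^2}{4}=\frac{2a^2+2b^2+2a+2b}{4}=\binom{a+1}{2}+\binom{b+1}{2}.
\]
If $a+b$ is odd, the left-hand side equals
\[
\frac{(a+b+1)^2}{4}+\frac{(a-b)^2-1}{4},
\]
which simplifies to the same expression.

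There is no real obstacle. The only point needing care is using the floor convention correctly in the two parity cases.
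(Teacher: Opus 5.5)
Your proposal is correct and follows the paper's proof essentially step for step. You sum the multiplicities $\mathrm{t}_M(m)$ separately for even and odd $M$ to get $\lfloor M^2/4\rfloor$, then use that exactly one of $a+b+1$ and $|a-b|$ is even, so the left-hand side of \eqref{degtent} equals $\tfrac14\bigl((a+b+1)^2+(a-b)^2-1\bigr)=\binom{a+1}{2}+\binom{b+1}{2}$ in both parity cases.
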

\begin{proof}
Counted with multiplicities, the number of factors of $T(M;\sigma u)$ equals the sum of the multiplicities $\mathrm{t}_M(m)$ over $m\in\mathbb{Z}$.
For $M$ odd the exponents are the odd $m$ with $|m|\leq M-2$, with multiplicities $\tfrac{M-1}{2},\ldots ,1$ on either side of $0$, for a total of $\tfrac14 (M^2-1)$. For $M$ even the multiplicity is $\tfrac{M}{2}$ at $m=0$ and $\tfrac{M}{2}-1,\ldots ,1$ on either side, for a total of $\tfrac14 M^2$. In both cases the total is equal to $\lfloor M^2/4\rfloor$.
Since exactly one of the numbers $a+b+1$ and $|a-b|$ is even, exactly one of the two floors on the left-hand side of \eqref{degtent} is exact, whence this left-hand side equals $\tfrac14\bigl((a+b+1)^2+(a-b)^2-1\bigr)=\tfrac12(a^2+a+b^2+b)$ in either case.
\end{proof}

\begin{proof}[Proof of Theorem \ref{tent:thm}]
Now that we have collected all required data, we are in a position to assemble the
proof of the Tent Formula in Theorem \ref{tent:thm} using Krattenthaler's `identification of factors' method.
Specifically, let $\mathcal{T}$ denote the right-hand side of \eqref{tent-thm}, which is a polynomial in $v=-u$ with constant term $1$ whose zeros are located at the points $v=\sigma p^m$, $\sigma\in\{1,-1\}$, $m\in\mathbb{Z}$. These points are pairwise distinct since $0<p<1$. For $\sigma=1$ the pair $(k_+,k_-)$ of Lemma \ref{S:lem} is $(n^+_+,n^-_+)$ and for $\sigma=-1$ it is $(n^+_-,n^-_-)$, so Proposition \ref{zeros:prp} states that $D(v)=\Delta(y)\tau$ vanishes at each zero of $\mathcal{T}$ at least to the order of that zero in $\mathcal{T}$. In other words, $\mathcal{T}$ divides the polynomial $\tau$. Comparing Lemma \ref{count:lem} (applied to $(a,b)=(n^+_+,n^-_+)$ and $(a,b)=(n^+_-,n^-_-)$) with Proposition \ref{deg:prp}, we find that
\begin{equation*}
\deg_v\mathcal{T}=\binom{n^+_++1}{2}+\binom{n^-_++1}{2}+\binom{n^+_-+1}{2}+\binom{n^-_-+1}{2}=\deg_v\tau .
\end{equation*}
Hence $\tau=\gamma\,\mathcal{T}$ for some constant $\gamma$, and a comparison of the constant terms yields $\gamma=1$.
\end{proof}

This completes the proof of Theorem \ref{tent:thm}, and thus of Theorem \ref{awc:thm}.

\begin{remark}
The mechanism behind the factorization is visible in \eqref{W}--\eqref{Wfinal}: the zeros at $v=p^m$ (i.e. the factors $1+up^{-m}$) only involve the sums $S_\delta(\theta_\delta)$, which by Lemma \ref{S:lem} are governed by $n^+_+$ and $n^-_+$, whereas the zeros at $v=-p^m$ (i.e. the factors $1-up^{-m}$) only involve $S_\delta(-\theta_\delta)$, which are governed by $n^+_-$ and $n^-_-$. The sign configuration $\epsilon^\pm_j$ is precisely what turns each one-point weight $\alpha_i$ in \eqref{sum1} into $(\theta_{\delta(i)}v)^{j_i}$, up to the sign $(-1)^{N-1}$ when $i\in I^-$, and thereby makes the row signs $\lambda_i$ in \eqref{lambda} depend on the row only through $(\theta_{\delta(i)}\sigma)^{j_i}$.
\end{remark}

\bibliographystyle{amsplain}

\end{document}